\newtheorem{theorem}{Theorem}[section]
\newtheorem{proposition}[theorem]{Proposition}
\newtheorem{lemma}[theorem]{Lemma}
\newtheorem{claim}[theorem]{Claim}
\newtheorem*{claim*}{Claim}
\newtheorem{Main Conjecture}[theorem]{Main Conjecture}
\newtheorem{conjecture}[theorem]{Conjecture}
\newtheorem{problem}[theorem]{Problem}
\theoremstyle{remark}
\newtheorem{example}[theorem]{Example}
\theoremstyle{plain}
\newcommand\FA{{\mathfrak{D}}} 
\newcommand{\FAring}{\mathfrak{P}}
\newcommand\rect{{{\sf Rect}}}
\newcommand{\cellsizeL}{19}
\newcommand{\cellsizeS}{14}
\newlength{\cellszL} \setlength{\cellszL}{\cellsizeL\unitlength}
\newsavebox{\cellL}
\sbox{\cellL}{\begin{picture}(\cellsizeL,\cellsizeL)
\put(0,0){\line(1,0){\cellsizeL}}
\put(0,0){\line(0,1){\cellsizeL}}
\put(\cellsizeL,0){\line(0,1){\cellsizeL}}
\put(0,\cellsizeL){\line(1,0){\cellsizeL}}
\end{picture}}
\newcommand\cellifyL[1]{\def\thearg{#1}\def\nothing{}%
\ifx\thearg\nothing
\vrule width0pt height\cellszL depth0pt\else
\hbox to 0pt{\usebox{\cellL} \hss}\fi%
\vbox to \cellszL{
\vss
\hbox to \cellszL{\hss$#1$\hss}
\vss}}
\newcommand\tableauL[1]{\vtop{\let\\\cr
\baselineskip -16000pt \lineskiplimit 16000pt \lineskip 0pt
\ialign{&\cellifyL{##}\cr#1\crcr}}}
\newlength{\cellszS} \setlength{\cellszS}{\cellsizeS\unitlength}
\newsavebox{\cellS}
\sbox{\cellS}{\begin{picture}(\cellsizeS,\cellsizeS)
\put(0,0){\line(1,0){\cellsizeS}}
\put(0,0){\line(0,1){\cellsizeS}}
\put(\cellsizeS,0){\line(0,1){\cellsizeS}}
\put(0,\cellsizeS){\line(1,0){\cellsizeS}}
\end{picture}}
\newcommand\cellifyS[1]{\def\thearg{#1}\def\nothing{}%
\ifx\thearg\nothing
\vrule width0pt height\cellszS depth0pt\else
\hbox to 0pt{\usebox{\cellS} \hss}\fi%
\vbox to \cellszS{
\vss
\hbox to \cellszS{\hss$#1$\hss}
\vss}}
\newcommand\tableauS[1]{\vtop{\let\\\cr
\baselineskip -16000pt \lineskiplimit 16000pt \lineskip 0pt
\ialign{&\cellifyS{##}\cr#1\crcr}}}
\definecolor{ltrGray}{RGB}{210, 210, 210}
\definecolor{ltGray}{RGB}{160, 160, 160}
\definecolor{ltrBlue}{RGB}{170, 196, 237}
\definecolor{ltBlue}{RGB}{78, 127, 203}
\newcommand\bad{hopeless}
\newcommand\good{optimistic}
\newcommand\Tab{{\sf SELT}}
\title{Shifted Edge labeled tableaux and Localizations}
\author{Colleen Robichaux}
\address{Dept.~of Mathematics, University of Illinois at Urbana-Champaign, Urbana, IL 61801}
\email{cer2@illinois.edu}
\begin{document}
\pagestyle{plain}

\mbox{}

\date{\today}

\begin{abstract}
We prove cases of a conjectural rule of H.~Yadav, A.~Yong, and the author
for structure coefficients of the D.~Anderson-W.~Fulton ring. 
In particular, we give a combinatorial description for certain localization coefficients of this ring, which is related to the equivariant cohomology of isotropic Grassmannians. 
\end{abstract}

\maketitle

\section{Introduction}\label{sec:intro}
This paper concerns a bijective combinatorics question arising from the author's previous work \cite{RYYedge}. We begin with some Schubert calculus motivation for the problem.

The \emph{symplectic group} ${\sf G}={\sf Sp}_{2n}({\mathbb C})$ is the automorphism group of a non-degenerate skew-symmetric bilinear form $\langle \cdot,\cdot\rangle$ on ${\mathbb C}^{2n}$. 
Let $Z={\sf LG}(n,2n)$
be the \emph{Lagrangian Grassmannian} of $n$-dimensional isotropic subspaces of ${\mathbb C}^{2n}$, where $V\subseteq {\mathbb C}^{2n}$ is \emph{isotropic} if $\langle v_1,v_2\rangle=0$ for all $v_1,v_2\in V$. 
The opposite Borel subgroup ${\sf B}_{-}\leq {\sf G}$ are those lower triangular matrices in ${\sf G}$. 
\emph{Schubert cells} are the orbits of under the action of ${\sf B}_{-}$ on $Z$, of which there are finitely
many. 
The Schubert cells and their Zariski closures, the \emph{Schubert varieties} $Z_{\lambda}=\overline{Z_{\lambda}^{\circ}}$, are indexed by strict partitions inside the shifted staircase partition 
\[\rho_n=(n,n-1,n-2,\ldots,3,2,1).\]
A \emph{strict partition} is an integer partition $\lambda=(\lambda_1>\lambda_2>\ldots>\lambda_{\ell})\in\mathbb{Z}_{>0}^\ell$. 
Let \[\mathcal{SP}_n:=\{\lambda\subseteq \rho_n \ | \ \lambda \text{ a strict partition}\}.\]
We identify $\lambda$ with its shifted shape,
the Young diagram with the $i$-th northmost row indented $i-1$ units from the west. 

\begin{example} We identify $\lambda=(5,3,2)\in \mathcal{SP}_5$ with the diagram below. 
\[\begin{picture}(80,40)
\put(0,25){$\ytableausetup
{boxsize=1em}\begin{ytableau}
 \ & \  & \ & \  & \   \\
\none  & \  & \ & \ \\
\none  & \none & \ & \ \\
\end{ytableau}$}
\end{picture}\]
\end{example}

The maximal torus ${\sf T}$ are the diagonal matrices in ${\sf G}$. The \emph{equivariant Schubert
classes} $[Z_{\lambda}]_{\sf T}$ form a $H_{\sf T}^{*}(pt)$-module basis of $H^{*}_{\sf T}(Z)$. Define the structure coefficients by
\[ [Z_{\lambda}]_{\sf T}\cdot[Z_{\mu}]_{\sf T} = \sum_{\nu\subseteq \rho_n} L_{\lambda,\mu}^{\nu} [Z_{\nu}]_{\sf T},\]
where $L_{\lambda,\mu}^{\nu}\in H_{\sf T}^{*}(pt):=\mathbb{Z}[t_1,\ldots,t_n]$.
Using a theorem of Graham \cite{Graham},  
\begin{equation}\label{eq:lg}
    L_{\lambda,\mu}^{\nu}\in {\mathbb Z}_{\geq 0}[\alpha_1,\alpha_2,\ldots,\alpha_{n}], 
\end{equation}
where $\alpha_1=2{t}_1,\alpha_2={t}_2-{t}_1, \ldots,\alpha_{n}={t}_n-{t}_{n-1}$.

When $|\lambda|+|\mu|=|\nu|$,  $L_{\lambda,\mu}^{\nu}$ recover the ordinary structure coefficients $l_{\lambda,\mu}^{\nu}$ of $H^{*}(Z)$.
As determined by P.~Pragacz \cite{Pragacz},
these $l_{\lambda,\mu}^{\nu}$ are the structure coefficients for the multiplication of $Q$-Schur polynomials
of I.~Schur \cite{Schur}. Combinatorial rules for $l_{\lambda,\mu}^{\nu}$ are given by D.~Worley \cite{Worley}, B.~Sagan \cite{Sagan}, and J.~Stembridge \cite{Stembridge}. For a more in depth discussion of this story and its relation to the ordinary and maximal Grassmannian settings, see \cite{RYYedge}.

\begin{problem}\label{problem:OG}
Give a combinatorial rule for $L_{\lambda,\mu}^{\nu}$.
\end{problem}

For $\lambda\in {\mathcal {SP}}_n$, let $\sigma_{\lambda}={\sf Pf}(c_{\lambda_i,\lambda_j})$ be the Pfaffian where 
\[c_{p,q}=\displaystyle\sum_{0\leq a\leq b \leq q}(-1)^b\left(\binom{b}{a}+\binom{b-1}{a}\right)z^a c_{p+b-a} c_{q-b}.\] 
If $\ell = \ell(\lambda)$ is odd, take $\lambda_{\ell+1}=0$.
In \cite{AF} D.~Anderson-W.~Fulton study the ${\mathbb Z}[z]$-algebra 
\[{\FAring}=\mathbb{Z}[z,c_1,c_2,\ldots]/(c_{p,p}=0, \forall p>0)\]
 with basis $\{\sigma_\lambda\}_{\lambda\in{\mathcal {SP}}_n}$ over ${\mathbb Z}[z]$. Define structure coefficients $\FA_{\lambda,\mu}^{\nu}$ by
\[\sigma_{\lambda}\cdot \sigma_{\mu} =\sum_{\nu\in\mathcal {SP}_n} \FA_{\lambda,\mu}^{\nu}\sigma_{\nu}.\]
D.~Anderson-W.~Fulton make the following connection to $H^{*}_{\sf T}(Z)$:
\begin{equation}
\label{eqn:FultonAndersonconnect}
L_{\lambda,\mu}^{\nu}(\alpha_1\mapsto z, \alpha_2\mapsto 0,\ldots,\alpha_n\mapsto 0)={\mathfrak D}_{\lambda,\mu}^{\nu}.
\end{equation}
Taking 
$\Delta(\nu;\lambda,\mu):=|\lambda|+|\mu|-|\nu|$ and $L(\nu;\lambda,\mu):=\ell(\lambda)+\ell(\mu)-\ell(\nu)$, let
\[{\mathfrak d}_{\lambda,\mu}^{\nu}:=\frac{\FA_{\lambda,\mu}^{\lambda}}{2^{L(\lambda;\lambda,\mu)-\Delta(\lambda;\lambda,\mu)}z^{\Delta(\lambda;\lambda,\mu)}}.\]
In fact, ${\mathfrak d}_{\lambda,\mu}^{\nu}\in \mathbb{Z}_{\geq0}$.
With H.~Yadav and A.~Yong in \cite{RYYedge}, the author proposes a rule for ${\mathfrak d}_{\lambda,\mu}^{\nu}$ in terms of \emph{shifted edge labeled tableaux}, with $d_{\lambda,\mu}^{\nu}$ denoting the number of tableaux satisfying this rule. By Equation (\ref{eqn:FultonAndersonconnect}), this is a conjectural rule for a specialization of Problem \ref{problem:OG}.

\begin{conjecture}\cite[Conjecture 10.1]{RYYedge}
\label{conj:C=D} ${\mathfrak d}_{\lambda,\mu}^{\nu}=d_{\lambda,\mu}^{\nu}$.
\end{conjecture}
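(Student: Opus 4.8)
The plan is to prove Conjecture~\ref{conj:C=D} by induction on the \emph{equivariant degree} $\Delta:=\Delta(\nu;\lambda,\mu)=|\lambda|+|\mu|-|\nu|\ge 0$, isolating the $z$-deformation as the sole source of difficulty. When $\Delta=0$ we have $|\nu|=|\lambda|+|\mu|$, so $\FA_{\lambda,\mu}^{\nu}$ is forced to be $z$-free and coincides with the classical $Q$-Schur structure constant $l_{\lambda,\mu}^{\nu}$ of Pragacz~\cite{Pragacz}, for which Stembridge~\cite{Stembridge} supplies a shifted Littlewood--Richardson rule. On the tableau side, the power of $z$ recorded by an edge-labeled tableau equals $\Delta$, so in the base case no free edge labels survive and the count $d_{\lambda,\mu}^{\nu}$ should collapse to Stembridge's. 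Verifying that the edge-labeled model of~\cite{RYYedge} degenerates to the classical shifted rule when $\Delta=0$ is the first step; this is a finite combinatorial check rather than an obstruction.

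For the inductive step I would recover the structure constants from localization data and transfer the resulting recursion to tableaux. Because $Z={\sf LG}(n,2n)$ is a GKM space with finitely many ${\sf T}$-fixed points $\{e_v\}_{v\in\mathcal{SP}_n}$, the restriction $H^{*}_{\sf T}(Z)\hookrightarrow\bigoplus_{v}H^{*}_{\sf T}(pt)$ is an injective ring map, so multiplication is computed pointwise on localizations:
\[\sum_{\nu}L_{\lambda,\mu}^{\nu}\bigl([Z_{\nu}]_{\sf T}|_{e_v}\bigr)=\bigl([Z_{\lambda}]_{\sf T}|_{e_v}\bigr)\bigl([Z_{\mu}]_{\sf T}|_{e_v}\bigr),\qquad\forall\,v\in\mathcal{SP}_n.\]
The matrix $\bigl([Z_{\nu}]_{\sf T}|_{e_v}\bigr)_{v,\nu}$ is triangular for the containment order with nonzero diagonal entries (each diagonal entry is an equivariant Euler class, a nonzero product of weights), hence invertible over the fraction field of $H^{*}_{\sf T}(pt)$. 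Thus the $L_{\lambda,\mu}^{\nu}$, and after the specialization~(\ref{eqn:FultonAndersonconnect}) the ${\mathfrak d}_{\lambda,\mu}^{\nu}$, are \emph{uniquely determined} by the localizations $[Z_{\nu}]_{\sf T}|_{e_v}$. These localizations are precisely the diagonal data that the edge-labeled rule already computes; this is the point at which the $\nu=\lambda$ localization content enters the general case.

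It then suffices to show that the tableau counts solve the same linear system, i.e.\ that
\[\sum_{\nu}d_{\lambda,\mu}^{\nu}\bigl([Z_{\nu}]_{\sf T}|_{e_v}\bigr)=\bigl([Z_{\lambda}]_{\sf T}|_{e_v}\bigr)\bigl([Z_{\mu}]_{\sf T}|_{e_v}\bigr)\]
holds after specialization, for every fixed point $e_v$. I would establish this by a weight-preserving bijection that glues an edge-labeled tableau witnessing $[Z_{\lambda}]_{\sf T}|_{e_v}$ to one witnessing $[Z_{\mu}]_{\sf T}|_{e_v}$ and rectifies the result, via shifted jeu de taquin, into a tableau for some $\nu$ weighted by $[Z_{\nu}]_{\sf T}|_{e_v}$, with the powers of $z$ carried by edge labels tracked exactly. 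Equivalently, one inducts on $\Delta$: the localization equations yield a recursion lowering $|\nu|$, and one realizes it on tableaux by a reversible edge-label deletion/insertion compatible with rectification, reducing to the already-verified base case.

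The main obstacle will be this combinatorial realization, specifically the bookkeeping of edge labels under shifted jeu de taquin. In the $\nu=\lambda$ localization case the relevant tableaux are rigid enough that edge labels are essentially forced, which is why that case is accessible; for general $\nu$ one must simultaneously slide cell entries and re-place edge labels while preserving both the shape statistics and the exact power of $z$, and there is no classical ($z$-free) analog to borrow, since the edge labels are exactly the equivariant novelty of~\cite{RYYedge}. Secondary technical points---clearing denominators so that the \emph{a priori} rational output of the linear system is manifestly the nonnegative integer predicted by Conjecture~\ref{conj:C=D}, and confirming compatibility with Graham positivity~(\ref{eq:lg})---I expect to follow once the bijection is in hand.
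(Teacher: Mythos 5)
First, a framing point: the statement you are proving is Conjecture~\ref{conj:C=D}, which this paper does \emph{not} prove. The paper establishes only a special case, Theorem~\ref{thm:locCoeffSkewSC} (the localization coefficients ${\mathfrak d}_{\lambda,\mu}^{\lambda}$ with $\mu\supseteq\rho_{n,m}$), and it does so by entirely elementary combinatorics: it characterizes exactly which $T\in\Tab(\rho_n/\rho_n,|\rho_{n,m}|)$ rectify to $S_{\rho_{n,m}}$ via slide decompositions and slidability (Proposition~\ref{prop:countCompat}), counts them (Theorem~\ref{thm:Local2n2}), and matches the count against the excited-Young-diagram formula of Ikeda--Naruse imported through Lemma~\ref{lem:locAF}. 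Your proposal, by contrast, is a program for the full conjecture, and its essential step --- the weight-preserving gluing/rectification bijection tracking powers of $z$ through shifted jeu de taquin --- is exactly the open content of the conjecture. You acknowledge this yourself (``the main obstacle will be this combinatorial realization''), so what you have written is a research plan with the hard part deferred, not a proof of anything the paper claims or of the conjecture itself.

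Beyond incompleteness, the GKM reduction has a concrete flaw. The interpolation argument (triangularity and invertibility of the matrix $([Z_{\nu}]_{\sf T}|_{e_v})$) is valid over $\mathbb{Z}[t_1,\ldots,t_n]$, but the conjecture lives \emph{after} the Anderson--Fulton specialization~(\ref{eqn:FultonAndersonconnect}), which sets $\alpha_1\mapsto z$ and $\alpha_i\mapsto 0$ for $i\geq 2$, i.e.\ $t_1=t_2=\cdots=t_n$. Under this specialization every weight of the form $t_j-t_i$ vanishes, so the Euler classes at most fixed points of ${\sf LG}(n,2n)$ specialize to $0$ and the localization matrix becomes singular. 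Consequently, verifying that the integers $d_{\lambda,\mu}^{\nu}$ satisfy the \emph{specialized} system would not pin them down and cannot prove $d_{\lambda,\mu}^{\nu}={\mathfrak d}_{\lambda,\mu}^{\nu}$; while verifying the \emph{unspecialized} system would require equivariant weights attached to the tableaux, which \cite{RYYedge} does not provide --- indeed $d_{\lambda,\mu}^{\nu}$ is a single integer, so the equation $\sum_{\nu}d_{\lambda,\mu}^{\nu}([Z_{\nu}]_{\sf T}|_{e_v})=([Z_{\lambda}]_{\sf T}|_{e_v})([Z_{\mu}]_{\sf T}|_{e_v})$ is not even well-typed without first solving the strictly harder Problem~\ref{problem:OG}. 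Two smaller inaccuracies: the $\Delta=0$ base case is an infinite family of identities (one per triple $\lambda,\mu,\nu$), not ``a finite combinatorial check,'' and even there the normalization $2^{L(\nu;\lambda,\mu)}$ relating $\FA_{\lambda,\mu}^{\nu}$ to Stembridge's rule \cite{Stembridge} requires argument; and your claim that the localizations ``are precisely the diagonal data that the edge-labeled rule already computes'' overstates what is known --- even the $\nu=\lambda$ case is only established for the restricted $\mu$ of Theorem~\ref{thm:locCoeffSkewSC}, by the direct counting argument sketched above rather than by any localization algebra.
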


We contribute to the partial results of \cite{RYYedge} for Conjecture 10.1. 
For $0\leq m\leq n\in\mathbb{Z}_{>0}$, let \[\rho_{n,m}=(n,n-1,\ldots,n-m+1).\] The following generalizes \cite[Theorem 10.6]{RYYedge}, for which only a proof sketch was included.

\begin{theorem}\label{thm:locCoeffSkewSC}
Suppose $\ell(\mu)=m\leq\ell(\lambda)=n$ such that for $\rho_{n,m}\subseteq \mu$. Then 
\[{\mathfrak d}_{\lambda,\mu}^{\lambda}=d_{\lambda,\mu}^{\lambda},\]
where $d_{\lambda,\mu}^{\lambda}=2^{\binom{n}{2}-\binom{n-m}{2}}$ when $\mu=\rho_{n,m}$ and $0$ otherwise.
\end{theorem}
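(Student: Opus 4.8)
The plan is to compute the two sides of the asserted equality separately and check that they agree. The first observation is that the hypothesis $\ell(\lambda)=n$ already pins down $\lambda$: since $\lambda\in\mathcal{SP}_n$ is a strict partition with exactly $n$ positive parts and $\lambda\subseteq\rho_n$, each $\lambda_i$ satisfies $n-i+1\leq\lambda_i\leq n-i+1$, so $\lambda=\rho_n$, the full shifted staircase. Thus the quantity in question is the pure localization coefficient ${\mathfrak d}_{\rho_n,\mu}^{\rho_n}$. Recording $\Delta(\rho_n;\rho_n,\mu)=|\mu|$ and $L(\rho_n;\rho_n,\mu)=m$, the definition of ${\mathfrak d}$ reduces the statement to the two assertions
\[{\mathfrak D}_{\rho_n,\mu}^{\rho_n}=z^{|\mu|}\ \text{ if }\ \mu=\rho_{n,m},\qquad {\mathfrak D}_{\rho_n,\mu}^{\rho_n}=0\ \text{ otherwise,}\]
together with the parallel count $d_{\rho_n,\mu}^{\rho_n}=2^{\binom{n}{2}-\binom{n-m}{2}}$ resp.\ $0$. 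The bookkeeping that will make the two computations line up is the identity $\binom{n}{2}-\binom{n-m}{2}=|\rho_{n,m}|-m$, which is exactly the number of strictly-above-diagonal cells of $\rho_{n,m}$.

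For the algebra side I would pass through equivariant cohomology. Because $\rho_n$ is the maximal element of $\mathcal{SP}_n$, the Schubert variety $Z_{\rho_n}$ is a single ${\sf T}$-fixed point $e_{\rho_n}$, the ${\sf B}_-$-fixed point lying in every $Z_\mu$, and the self-intersection structure constant is a restriction, $L_{\rho_n,\mu}^{\rho_n}=[Z_\mu]_{\sf T}\big|_{e_{\rho_n}}$. I would evaluate this by the excited-Young-diagram formula of Ikeda--Naruse for the Lagrangian Grassmannian, writing it as a sum over excited diagrams of $\mu$ inside $\rho_n$, each contributing the product of the $|\mu|$ weights of its cells. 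The Anderson--Fulton specialization $\alpha_1\mapsto z$ and $\alpha_i\mapsto 0$ for $i\geq 2$ is precisely $t_i\mapsto z/2$ for all $i$, under which a cell weight of sum type $t_i+t_j$ (including $2t_i$) becomes $z$ while a weight of difference type $t_i-t_j$ becomes $0$. Hence only excited diagrams all of whose cells carry sum-type weights survive, each contributing $z^{|\mu|}$. The crux is then the purely combinatorial claim that there is a unique such diagram when $\mu=\rho_{n,m}$, namely the ground state occupying the top $m$ rows and meeting the diagonal in each row so that every weight is of sum type, and none when $\mu\neq\rho_{n,m}$, since any strict $\mu$ of length $m$ other than $\rho_{n,m}$ acquires, in every excited placement inside $\rho_n$, a cell pushed far enough off the diagonal to receive a $t_i-t_j$ weight. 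This yields ${\mathfrak D}_{\rho_n,\mu}^{\rho_n}=z^{|\mu|}$ exactly when $\mu=\rho_{n,m}$, and dividing by $2^{m-|\mu|}z^{|\mu|}$ gives ${\mathfrak d}_{\rho_n,\mu}^{\rho_n}=2^{|\mu|-m}=2^{\binom{n}{2}-\binom{n-m}{2}}$ in the surviving case.

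For the combinatorial side I would analyze the shifted edge-labeled tableau rule of \cite{RYYedge} directly at $\nu=\lambda=\rho_n$. With the output shape equal to the input shape, the shape and content constraints of the rule rigidify the underlying filling: I expect to show that a valid configuration exists only when the content $\mu$ fills the top $m$ rows of $\rho_n$, i.e.\ $\mu=\rho_{n,m}$, and that in this case the cell entries and all diagonal data are uniquely determined. The only remaining freedom is the prime/non-prime (equivalently edge-versus-interior) choice at each of the strictly-above-diagonal cells, and these choices are mutually unconstrained, giving exactly $2^{\binom{n}{2}-\binom{n-m}{2}}$ tableaux. Matching this against the algebra computation proves ${\mathfrak d}_{\rho_n,\mu}^{\rho_n}=d_{\rho_n,\mu}^{\rho_n}$.

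The main obstacle is the combinatorial count, in two parts: proving that no valid tableau exists unless $\mu=\rho_{n,m}$, a forcing argument that must rule out every other strict $\mu$ of length $m$, and verifying that the above-diagonal prime choices are genuinely free and exhaust all valid tableaux, so that the power of $2$ is exactly the number of above-diagonal cells. On the algebra side the analogous delicate point is the sign analysis in the excited-diagram sum: pinning down conventions so that survival under $t_i\mapsto z/2$ selects precisely the ground state of $\rho_{n,m}$ with multiplicity one. I would settle this with a diagonal/antichain argument showing that excitation moves can only displace cells away from the diagonal, hence can only create difference-type weights, so the ground state is the unique all-sum-type diagram and the two independent computations agree.
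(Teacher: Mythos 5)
Your overall strategy (compute ${\mathfrak d}_{\lambda,\mu}^{\lambda}$ via localization/excited diagrams, compute $d_{\lambda,\mu}^{\lambda}$ directly, and compare) matches the paper's, and your algebra-side conclusion is correct, but two points there are off. First, $\ell(\lambda)=n$ does \emph{not} force $\lambda=\rho_n$: the theorem allows $\lambda$ to live in a larger ambient staircase (e.g.\ $\lambda=(5,4,3,1)$ with $n=4$ in Example \ref{ex:shadeBij}); what actually makes the reduction work is that both sides depend only on $\ell(\lambda)$ — Lemma \ref{lem:locAF} is stated in terms of $\mathcal{E}_{\rho_{\ell(\lambda)}}(\mu)$, and the skew shape $\lambda/\lambda$ is empty with $n$ diagonal edges regardless of $\lambda$. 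Second, your vanishing mechanism for $\mu\neq\rho_{n,m}$ is wrong: in the Ikeda--Naruse formula for the Lagrangian Grassmannian every box weight is of sum type $t_i+t_j$, so nothing is killed by $t_i\mapsto z/2$ and there is no ``selection of all-sum-type diagrams.'' The vanishing is simply that $\rho_{n,m}\subseteq\mu$, $\ell(\mu)=m$, $\mu\neq\rho_{n,m}$ forces $\mu\not\subseteq\rho_n$, hence $\mathcal{E}_{\rho_n}(\mu)=\emptyset$; and for $\mu=\rho_{n,m}$ the initial diagram admits no excited moves, so $\#\mathcal{E}_{\rho_n}(\rho_{n,m})=1$. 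The paper gets all of this in two lines from Lemma \ref{lem:locAF}.

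The genuine gap is the combinatorial count, which is the entire content of the paper (the earlier reference only sketched it). The objects counted by $d_{\lambda,\rho_{n,m}}^{\lambda}$ are $T\in{\sf SELT}(\lambda/\lambda,|\rho_{n,m}|)$ with ${\sf Rect}(T)=S_{\rho_{n,m}}$: since $\lambda/\lambda$ has no boxes, \emph{every} label sits on one of the $n$ diagonal edges, and there are no cell entries and no primed/unprimed choices at above-diagonal cells — that mechanism belongs to a different species of shifted tableau and does not apply here. The number $2^{\binom{n}{2}-\binom{n-m}{2}}$ arises as the number of ways to distribute $[|\rho_{n,m}|]$ among the $n$ diagonal edges so that row rectification returns the superstandard tableau, and proving this requires tracking how the edge-labeled jeu de taquin interacts with moving labels between adjacent diagonal edges: this is the ${\sf Sl}_I$/${\sf shift}_J$ machinery and the chain Proposition \ref{prop:sameJDT} $\to$ Proposition \ref{prop:sljSyncAtKSlidable} $\to$ Claim \ref{claim:compatSl} $\to$ Proposition \ref{prop:countCompat} $\to$ Theorem \ref{thm:Local2n2}, where the characterization ``${\sf Rect}(T)=S_{\rho_{n,m}}$ iff $\mathcal{I}(T)$ is slidable'' yields exactly two free choices per edge per row, row $k$ contributing $2^{n-k}$. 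Your proposal replaces all of this with ``I expect to show,'' so while the answer and the shape of the count are right, the argument that would establish them is missing.
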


  In the case $\mu=\rho_{n,m}$, Theorem \ref{thm:locCoeffSkewSC} highlights an intriguingly simple enumeration. We focus on this case in particular as an enumerative combinatorics problem.  

\begin{example}\label{ex:shadeBij} When $\mu=\rho_{n,m}$, Theorem \ref{thm:locCoeffSkewSC} shows that ${\mathfrak d}_{\lambda,\mu}^{\lambda}$ is computed by subsets of boxes in the first $n-1$ columns of $\mu$. Further, Theorem \ref{thm:locCoeffSkewSC} states that these subsets are in bijection with certain shifted edge labeled tableaux.
Below is the shifted edge labeled tableau determined by the given shaded blue subset 
for $\lambda=(5,4,3,1)$ and $\mu=\rho_{4,2}$.
 \[\begin{picture}(250,80)
\put(0,60){$\ytableausetup
{boxsize=1.5em}\begin{ytableau}
 \ & \  & \  & \ & \   \\
\none  & \ & \ & \ & \ \\
\none  & \none & \ & \ & \ \\
 \none & \none & \none & \ 
\end{ytableau}$}
\put(22,37){$2 5$}
\put(40,18){$1 3$}
\put(56,-1){$4 6 7$}
\put(110,35){$\longleftrightarrow$}
\put(150,60){$\ytableausetup
{boxsize=1.5em}\begin{ytableau}
 *(ltrBlue)\ & *(ltrBlue)\  & \  & \ & \  \\
\none  & \ & *(ltrBlue)\ & \ & \ \\
\none  & \none & \ & \ & \ \\
 \none & \none & \none & \ 
\end{ytableau}$}
\linethickness{0.5mm}
\put(150,77.5){{\line(1,0){55.8}}}
 \put(150,60.5){{\line(1,0){20}}}
\put(168.8,42){{\line(1,0){37}}}
\put(150.6,59.6){{\line(0,1){18.8}}}
\put(169.2,41.1){{\line(0,1){18.9}}}
\put(205.2,41.1){{\line(0,1){36}}}
\end{picture}\]  
\end{example}
\section{Combinatorial Background}\label{sec:shiftededge}
We first recall the {shifted edge labeled tableaux} defined in \cite{RYYedge}.

\subsection{Shifted edge labeled tableaux} For $\lambda\subseteq \nu$, the skew shape $\nu/\lambda$ is those boxes
of $\nu$ not in $\lambda$. A \emph{diagonal edge} of $\nu/\lambda$ 
is the southern edge of a \emph{diagonal box} of $\nu$, a box in matrix position $(i,i)$. 
When $\lambda=\emptyset$ we say $\nu=\nu/\lambda$ is a \emph{straight shape}.

\begin{example}\label{ex:skewShape}For $\nu=(5,3,2)$ and $\lambda=(3,2)$, the skew shape $\nu/\lambda$ consists of the five unshaded boxes shown below. The three diagonal edges of $\nu/\lambda$ are bolded in blue.
\[\begin{picture}(80,35)
\put(0,20){$\ytableausetup
{boxsize=1em}\begin{ytableau}
 *(ltrGray)\ & *(ltrGray)\  & *(ltrGray)\ & \  & \   \\
\none  & *(ltrGray)\  & *(ltrGray)\ & \ \\
\none  & \none & \ & \ \\
\end{ytableau}$}
\linethickness{0.8mm}
\put(0,20){\textcolor{ltBlue}{\line(1,0){13}}}
\put(12.5,7){\textcolor{ltBlue}{\line(1,0){13}}}
\put(25,-5){\textcolor{ltBlue}{\line(1,0){13}}}
\end{picture}\]
\end{example}

A \emph{shifted edge labeled tableau} of shape $\nu/\lambda$ is a filling of the boxes and diagonal edges of $\nu/\lambda$ with labels $[n]:=\{1,2,3,\ldots,n\}$ such that:
\begin{itemize}
\item[(i)] Each box of $\nu/\lambda$ contains exactly one label.
\item[(ii)] A diagonal edge of $\nu/\lambda$ contains a (possibly empty) set of labels.
\item[(iii)] Each $i\in[n]$ appears exactly once.
\item[(iv)] Labels strictly increase west to east across rows and down columns. Each label on a diagonal edge is strictly larger
than any label directly north of it.
\end{itemize}

\begin{example}For $\nu$ and $\lambda$ as in Example \ref{ex:skewShape}, only the leftmost tableau below is a {shifted edge labeled tableau} of shape $\nu/\lambda$. Reading left to right, the remaining tableaux violate (ii), (iii), and (iv), respectively.
\[\begin{picture}(370,50)
\put(0,35){$\ytableausetup
{boxsize=1.3em}\begin{ytableau}
 \ & \  & \ & 2  & 6   \\
\none  & \  & \ & 5 \\
\none  & \none & 4 & 7 \\
\end{ytableau}$}
\put(18,13){$1 3$}
\put(38,-3){$8$}
\put(100,35){$\ytableausetup
{boxsize=1.3em}\begin{ytableau}
 \ & \  & \ & 2  & 6   \\
\none  & \  & \ & 5 \\
\none  & \none & 4 & 7 \\
\end{ytableau}$}
\put(134,17){$1 3$}
\put(138,-3){$8$}
\put(200,35){$\ytableausetup
{boxsize=1.3em}\begin{ytableau}
 \ & \  & \ & 2  & 6   \\
\none  & \  & \ & 5 \\
\none  & \none & 4 & 7 \\
\end{ytableau}$}
\put(218,13){$1 3$}
\put(235,-3){$6 8$}
\put(300,35){$\ytableausetup
{boxsize=1.3em}\begin{ytableau}
 \ & \  & \ & 2  & 6   \\
\none  & \  & \ & 5 \\
\none  & \none & 4 & 7 \\
\end{ytableau}$}
\put(321,13){$1$}
\put(335,-3){$3 8$}
\end{picture}\]	
\end{example}
Let ${\sf SELT}(\nu/\lambda,n)$ be the set of
all such tableaux. Restricting to tableaux satisfying only (i), (iii) and (iv) results in \emph{shifted standard Young tableaux}, as described in \cite{Worley}. 

An \emph{inner corner} ${\sf c}$ of $\nu/\lambda$ is a maximally southeast box of $\lambda$. For $T \in {\sf SELT}(\nu/\lambda,n)$, we define a \emph{jeu de taquin slide} ${\sf jdt}_{\sf c}(T)$, by the following. First place $\bullet$ in ${\sf c}$, and apply one of the following {slides} determined by the labels around ${\sf c}$:
\begin{itemize}
\item[(1)] $\tableauS{\bullet & a\\ b}\mapsto \tableauS{b & a\\ \bullet }$ \ (if $b<a$, or $a$ does not exist)
\smallskip
\item[(2)] $\tableauS{\bullet & a\\ b}\mapsto \tableauS{a & \bullet\\ b}$ \ (if $a<b$, or $b$ does not exist)
\item[(3)] $\begin{picture}(30,20)\put(0,0){$\tableauS{\bullet & a}$}
\put(3,-5){$S$}
\end{picture} \mapsto
\begin{picture}(30,20)\put(0,0){$\tableauS{a & \bullet }$}
\put(3,-5){$S$}
\end{picture}$ (if $a<\min(S)$ or $S=\emptyset$)
\item[(4)] $\begin{picture}(30,20)\put(0,0){$\tableauS{\bullet & a}$}
\put(3,-5){$S$}
\end{picture} \mapsto
\begin{picture}(30,20)\put(0,0){$\tableauS{s & a }$}
\put(3,-5){$S'$}
\end{picture}$ (if $s:=\min(S)<a$, or $a$ does not exist, where $S':=S\setminus \{s\}$).
\end{itemize}
Repeat this process on each new box occupied by $\bullet$ until $\bullet$ arrives at a diagonal edge
of $\lambda$ (\emph{i.e.} (4) is used) or a box that has no labels directly south or east of it. Then obtain ${\sf jdt}_{\sf c}(T)$ by removing $\bullet$ from the resulting tableau.

\begin{example}\label{ex:jdtpath} Below is the computation of ${\sf jdt}_{(1,2)}(T)$.
	\[\begin{picture}(380,55)
\put(-10,25){$T:$}
\put(10,40){$\ytableausetup
{boxsize=1.3em}\begin{ytableau}
 \ & \bullet  & 2   \\
\none    & 1 & 3 \\
\none  & \none  & 5 \\
\end{ytableau}$}
\put(28,18){$4 6$}
\put(47,2){$ 7$}
\put(70,25){$\rightarrow$}
\put(90,40){$\ytableausetup
{boxsize=1.3em}\begin{ytableau}
 \ & 1  & 2   \\
\none    & \bullet & 3 \\
\none  & \none  & 5 \\
\end{ytableau}$}
\put(108,18){$4 6$}
\put(127,2){$ 7$}
\put(150,25){$\rightarrow$}
\put(170,40){$\ytableausetup
{boxsize=1.3em}\begin{ytableau}
 \ & 1  & 2   \\
\none    & 3 & \bullet \\
\none  & \none  & 5 \\
\end{ytableau}$}
\put(188,18){$4 6$}
\put(207,2){$ 7$}
\put(230,25){$\rightarrow$}
\put(250,40){$\ytableausetup
{boxsize=1.3em}\begin{ytableau}
 \ & 1  & 2   \\
\none    & 3 & 5 \\
\none  & \none  & \bullet \\
\end{ytableau}$}
\put(268,18){$4 6$}
\put(287,2){$ 7$}
\put(310,25){$\rightarrow$}
\put(330,40){$\ytableausetup
{boxsize=1.3em}\begin{ytableau}
 \ & 1  & 2   \\
\none    & 3 & 5 \\
\none  & \none  & 7 \\
\end{ytableau}$}
\put(348,18){$4 6$}
\end{picture}\]	
\end{example}

The \emph{row rectification} ${{\sf Rect}}(T)$ of $T\in {\sf SELT}(\nu/\lambda,n)$ is defined iteratively: Choose the southmost inner corner ${\sf c}_0$ of $\nu/\lambda$ and compute
$T_1:={\sf jdt}_{{\sf c}_0}(T)$ with shape $\nu^{(1)}/\lambda^{(1)}$. Now let ${\sf c}_1$ be a southmost inner corner of $\nu^{(1)}/\lambda^{(1)}$ and compute
$T_2:={\sf jdt}_{{\sf c}_1}(T_1)$. Repeat $|\lambda|$ times, arriving at a straight shape tableau ${{\sf Rect}}(T)$.

Let $S_{\mu}$ be the superstandard tableau of $\mu$, obtained by filling the boxes of $\mu$ in English reading order
with $[n]$, where $n=|\mu|$. 
Define
\[d_{\lambda,\mu}^{\nu}:=\#\{T\in {\sf SELT}(\nu/\lambda,|\mu|): {\rect}(T)=S_{\mu}\}.\]

\begin{example}
Suppose $\lambda=(2,1), \mu=(3,2), \nu=(3,2)$. Below are the only shifted edge labeled tableaux that rectify to $S_{\mu}$, so $d_{\lambda,\mu}^{\nu} = 2$.

\[\begin{picture}(270,50)
\put(-50,35){$\tableauL{{\ }&{\ }&{3}\\ & {\bullet} & 5} $}
\put(-42,31){$1$}
\put(-27,12){$24$}
\put(20,30){$\rightarrow$}
\put(50,35){$\tableauL{{\ }&{\bullet}&{3}\\ & {2 } & 5}$}
\put(58,31){$1$}
\put(75,12){$4$}
\put(120,30){$\rightarrow$}
\put(150,35){$\tableauL{{\bullet }&{2}&{3}\\ & {4} & 5}$}
\put(158,31){$1$}
\put(220,30){$\rightarrow$}
\put(250,35){$\tableauL{{1}&{2}&{3}\\ & {4} & 5}$}
\end{picture}\]
\[\begin{picture}(270,40)
\put(-50,25){$\tableauL{{\ }&{\ }&{3}\\ & {\bullet} & 5}$}
\put(-30,2){$124$}
\put(20,20){$\rightarrow$}
\put(50,25){$\tableauL{{\ }&{\bullet}&{3}\\ & {1} & 5}$}
\put(74,2){$24$}
\put(120,20){$\rightarrow$}
\put(150,25){$\tableauL{{\bullet }&{1}&{3}\\ & {2} & 5}$}
\put(175,2){$4$}
\put(220,20){$\rightarrow$}
\put(250,25){$\tableauL{{1}&{2}&{3}\\ & {4} & 5}$}
\end{picture}\]
\end{example}

\subsection{Excited Young diagrams}
For $\lambda,\mu\in {\mathcal SP}_n$, place $+$'s in the shape of $\lambda$ inside $\mu$. Call this the \emph{initial diagram} of $\lambda$ in $\mu$. 
Define the following local move on the $+$'s in $\mu$:
\[\begin{picture}(100,30)
\put(0,15){$\ytableausetup
{boxsize=1em}\begin{ytableau}
 + &  \   \\
 *(ltrBlue)\ &  \   
\end{ytableau}$}
\put(45,10){$\rightarrow$}
\put(80,15){$\ytableausetup
{boxsize=1em}\begin{ytableau}
 \ &  \   \\
 *(ltrBlue)\ &  +   
\end{ytableau}$}
\put(110,15){$,$}
\end{picture}\]
where the shaded box either does not exist in $\mu$ or is a box in $\mu$ unoccupied by a $+$.
An \emph{excited Young diagram} (EYD) of $\lambda$ in $\mu$ is a configuration of $+$'s formed by successive applications of the above local move on the initial diagram $\lambda$ in $\mu$.
Let $\mathcal{E}_{\mu}(\lambda)$ denote the set of all EYDs of $\lambda$ in $\mu$. If $\lambda\not\subseteq\mu$, then $\mathcal{E}_{\mu}(\lambda)=\emptyset$.

\begin{example}\label{ex:Eyd1}
For $\lambda=(2,1), \mu=(5,3,2)$, below are the EYDs in $\mathcal{E}_{\mu}(\lambda)$, where the leftmost is the initial diagram:
\begin{gather*}
\tableauS{{+}&{+}&{\ }&{\ }&{\ }\\  & {+}&{\ }&{\ }\\  & & {\ } & {\ }}
\qquad
\tableauS{{+}&{+}&{\ }&{\ }&{\ }\\  & {\ }&{\ }&{\ }\\  & & {+}& {\ }}
\qquad
\tableauS{{+}&{\ }&{\ }&{\ }&{\ }\\  & {\ }&{+ }&{\ }\\  & & {+}& {\ }}
\qquad
\tableauS{{\ }&{\ }&{\ }&{\ }&{\ }\\  & {+ }&{+ }&{\ }\\  & & {+}& {\ }}\ .
\end{gather*}
\end{example}

The following is derived from results of T.~Ikeda-H.~Naruse \cite{Ikeda.Naruse} using Equation (\ref{eq:lg}).
\begin{lemma}\cite[Lemma 10.5]{RYYedge}\label{lem:locAF}
${\mathfrak{d}}_{\lambda,\mu}^{\lambda} = \#\mathcal{E}_{\rho_{\ell(\lambda)}}(\mu)\times 2^{|\mu|-\ell(\mu)}$.
\end{lemma}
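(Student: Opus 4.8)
The plan is to realize ${\mathfrak d}_{\lambda,\mu}^{\lambda}$ as a specialization of an equivariant localization coefficient and then apply the excited Young diagram formula of Ikeda-Naruse. First I would prove the localization identity $L_{\lambda,\mu}^{\lambda}=[Z_{\mu}]_{\sf T}|_{e_{\lambda}}$, the restriction of the Schubert class $[Z_{\mu}]_{\sf T}$ to the ${\sf T}$-fixed point $e_{\lambda}$. This is the standard ``repeated index'' principle: restrict the defining relation $[Z_{\lambda}]_{\sf T}\cdot[Z_{\mu}]_{\sf T}=\sum_{\nu}L_{\lambda,\mu}^{\nu}[Z_{\nu}]_{\sf T}$ at $e_{\lambda}$. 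Since $[Z_{\nu}]_{\sf T}|_{e_{\lambda}}=0$ unless $\nu\subseteq\lambda$, while the support condition on structure constants forces $L_{\lambda,\mu}^{\nu}=0$ unless $\nu\supseteq\lambda$, the only surviving term is $\nu=\lambda$; dividing by the nonzero factor $[Z_{\lambda}]_{\sf T}|_{e_{\lambda}}$ gives the identity. Combined with (\ref{eqn:FultonAndersonconnect}), this identifies $\FA_{\lambda,\mu}^{\lambda}$ with the image of $[Z_{\mu}]_{\sf T}|_{e_{\lambda}}$ under the specialization $\alpha_1\mapsto z$, $\alpha_2\mapsto 0,\ldots,\alpha_n\mapsto 0$.

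Next I would invoke the Ikeda-Naruse formula, which expresses $[Z_{\mu}]_{\sf T}|_{e_{\lambda}}$ as a sum over excited Young diagrams $D\in\mathcal{E}_{\lambda}(\mu)$ of products of box weights, each weight being a type $C$ root determined by the excited position of the box: either a ``sum-type'' root $t_i+t_j$ (including the long root $2t_i$ along the diagonal) or a ``difference-type'' root $t_i-t_j$. The specialization sends $t_i\mapsto z/2$ for every $i$ (as $\alpha_1=2t_1$ and $\alpha_i=t_i-t_{i-1}$), so each sum-type root maps to $z$ and each difference-type root maps to $0$. Equivalently, since $L_{\lambda,\mu}^{\lambda}$ is homogeneous of degree $|\mu|$ with nonnegative coefficients by (\ref{eq:lg}), the specialization extracts the coefficient of $\alpha_1^{|\mu|}$ with no cancellation. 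Hence only those $D$ all of whose boxes carry sum-type weights contribute, and since each $D$ has exactly $|\mu|$ boxes, each such $D$ contributes precisely $z^{|\mu|}$, giving $\FA_{\lambda,\mu}^{\lambda}=\#\{D\in\mathcal{E}_{\lambda}(\mu):D\text{ is all sum-type}\}\cdot z^{|\mu|}$.

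The crux is the combinatorial identification of the surviving diagrams. I would show that a box in an excited diagram of $\lambda$ carries a sum-type weight exactly when it lies in a column of index at most $\ell(\lambda)$, i.e.\ inside the shifted staircase $\rho_{\ell(\lambda)}=\{(i,j):i\le j\le\ell(\lambda)\}\subseteq\lambda$, whereas a box in a column exceeding $\ell(\lambda)$ carries a difference-type weight and kills the diagram. Thus the all-sum-type diagrams are exactly the $D\in\mathcal{E}_{\lambda}(\mu)$ with $D\subseteq\rho_{\ell(\lambda)}$. Since $\rho_{\ell(\lambda)}\subseteq\lambda$ and the excitation move is local, restricting the ambient shape yields a bijection between these and $\mathcal{E}_{\rho_{\ell(\lambda)}}(\mu)$, so $\FA_{\lambda,\mu}^{\lambda}=\#\mathcal{E}_{\rho_{\ell(\lambda)}}(\mu)\cdot z^{|\mu|}$.

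Finally, I would unwind the normalization. For $\nu=\lambda$ one has $\Delta(\lambda;\lambda,\mu)=|\mu|$ and $L(\lambda;\lambda,\mu)=\ell(\mu)$, so by definition ${\mathfrak d}_{\lambda,\mu}^{\lambda}=\FA_{\lambda,\mu}^{\lambda}/(2^{\ell(\mu)-|\mu|}z^{|\mu|})$; substituting the previous display gives ${\mathfrak d}_{\lambda,\mu}^{\lambda}=\#\mathcal{E}_{\rho_{\ell(\lambda)}}(\mu)\cdot 2^{|\mu|-\ell(\mu)}$, as claimed (the factor $2^{|\mu|-\ell(\mu)}$ is entirely an artifact of the normalization, not of the box weights, since each surviving weight specializes to $z$ with no extra factor of two). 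I expect the main obstacle to be this crux step: extracting from the explicit Ikeda-Naruse boundary labeling at $e_{\lambda}$ the precise statement that the sum-type region is the column-$\le\ell(\lambda)$ staircase, and checking that the ambient-restriction bijection onto $\mathcal{E}_{\rho_{\ell(\lambda)}}(\mu)$ is compatible with the intermediate excitation states.
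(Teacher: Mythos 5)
Your proposal is correct and takes essentially the same route as the paper, which proves this lemma (as \cite[Lemma 10.5]{RYYedge}) by deriving it from the Ikeda--Naruse excited Young diagram localization formula together with Graham positivity, Equation (\ref{eq:lg}) --- exactly your chain $L_{\lambda,\mu}^{\lambda}=[Z_{\mu}]_{\sf T}|_{e_{\lambda}}$, type $C$ box weights (sum-type precisely in columns $\le \ell(\lambda)$, giving the restriction to $\mathcal{E}_{\rho_{\ell(\lambda)}}(\mu)$), and the no-cancellation specialization $\alpha_1\mapsto z$, $\alpha_i\mapsto 0$ for $i\geq 2$. Your normalization unwind via $\Delta(\lambda;\lambda,\mu)=|\mu|$ and $L(\lambda;\lambda,\mu)=\ell(\mu)$, producing the factor $2^{|\mu|-\ell(\mu)}$, likewise matches the cited derivation.
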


Using this result, 
\cite{RYYedge} obtains the following partial result towards Conjecture \ref{conj:C=D}.
\begin{theorem}\cite[Theorem 10.3]{RYYedge}\label{thm:locPieri}
$d_{\lambda,(p)}^{\lambda}=\binom{\ell(\lambda)}{p}2^{p-1}={\mathfrak d}_{\lambda,(p)}^{\lambda}$.
\end{theorem}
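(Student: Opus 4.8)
The plan is to prove the two displayed equalities separately, reducing both to the single count $\binom{n}{p}2^{p-1}$ with $n:=\ell(\lambda)$; note that when $p>n$ both sides vanish (no ${\sf SELT}$ of $\lambda/\lambda$ can rectify to the length-$p$ row $S_{(p)}$, and $\#\mathcal{E}_{\rho_n}((p))=0$), so I may assume $p\le n$. The right-hand equality ${\mathfrak d}_{\lambda,(p)}^{\lambda}=\binom{n}{p}2^{p-1}$ follows from Lemma \ref{lem:locAF}: since $\ell((p))=1$ and $|(p)|=p$, that lemma gives ${\mathfrak d}_{\lambda,(p)}^{\lambda}=\#\mathcal{E}_{\rho_n}((p))\cdot 2^{p-1}$, so it remains to show $\#\mathcal{E}_{\rho_n}((p))=\binom{n}{p}$.

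Counting these excited diagrams is the easy half. The initial diagram of $(p)$ in $\rho_n$ places a $+$ in each of $(1,1),\dots,(1,p)$, and every excited move keeps a $+$ on its diagonal $\{(r,r+k):r\ge 1\}$; hence an EYD has exactly one $+$ on each of the diagonals $k=0,1,\dots,p-1$, say in row $r_k$. I would check that the local move is available precisely when it preserves the inequalities, so that the reachable configurations are exactly the weakly increasing sequences $1\le r_0\le r_1\le\cdots\le r_{p-1}\le n-p+1$, the last bound being the condition that $(r_{p-1},\,r_{p-1}+p-1)\in\rho_n$. These are the size-$p$ multisets of $[n-p+1]$, of which there are $\binom{(n-p+1)+p-1}{p}=\binom{n}{p}$. (Equivalently, one verifies that $\#\mathcal{E}_{\rho_n}((p))$ satisfies Pascal's recurrence.)

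For the left-hand equality the main work is with $d_{\lambda,(p)}^{\lambda}$. Since $\nu=\lambda$, the shape $\lambda/\lambda$ has no boxes, so a tableau in ${\sf SELT}(\lambda/\lambda,p)$ is simply a distribution of $[p]$ among the $n$ diagonal edges $e_1,\dots,e_n$ (the southern edges of $(1,1),\dots,(n,n)$), conditions (i)--(iv) imposing nothing beyond that each label occurs once. The plan is to run the shifted jeu de taquin slides on such a distribution and show that $\rect(T)$ always consists of a single row in row $1$ together with some leftover labels on the first diagonal edge $e_1$; since a one-row straight shifted tableau on $[p]$ is forced to be $1\,2\cdots p$, this gives $\rect(T)=S_{(p)}$ exactly when $e_1$ ends up empty. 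The cleanest route, which simultaneously yields $d_{\lambda,(p)}^{\lambda}={\mathfrak d}_{\lambda,(p)}^{\lambda}$, is to run the process in reverse: starting from $S_{(p)}$, perform reverse slides growing the empty interior out to $\lambda$, recording at each passage through a diagonal edge a binary choice (whether a label is deposited on the edge or slid past it). I expect these to contribute the factor $2^{p-1}$, one for each of the $p-1$ off-diagonal cells of $(p)$ (matching the exponent $|(p)|-\ell((p))$ in Lemma \ref{lem:locAF}), while the resulting edge-positions encode an element of $\mathcal{E}_{\rho_n}((p))$. This would produce a bijection $\{T\in{\sf SELT}(\lambda/\lambda,p):\rect(T)=S_{(p)}\}\longleftrightarrow \mathcal{E}_{\rho_n}((p))\times\{0,1\}^{p-1}$, whence $d_{\lambda,(p)}^{\lambda}=\#\mathcal{E}_{\rho_n}((p))\cdot 2^{p-1}=\binom{n}{p}2^{p-1}$ and, by the previous paragraph, $={\mathfrak d}_{\lambda,(p)}^{\lambda}$.

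The hard part will be the rectification analysis underlying this bijection. The subtlety is that the diagonal edges carry \emph{sets} of labels, so slides (3) and (4) interact in a way that depends on the whole distribution; one must show that exactly $p-1$ genuine binary choices occur and that the surviving positional data is a legal excited diagram and nothing more. A fallback to the bijection is a direct induction: conditioning on whether the deepest edge $e_n$ is used, I would aim to establish $d_{\lambda,(p)}^{\lambda}=d_{\lambda',(p)}^{\lambda'}+2\,d_{\lambda',(p-1)}^{\lambda'}$ for any $\lambda'$ with $\ell(\lambda')=n-1$, matching $\binom{n}{p}2^{p-1}=\binom{n-1}{p}2^{p-1}+2\binom{n-1}{p-1}2^{p-2}$, with base cases $d_{\lambda,(1)}^{\lambda}=n$ and $d_{\lambda,(p)}^{\lambda}=0$ for $p>n$; but making the slide localize cleanly to $e_n$ is itself the crux.
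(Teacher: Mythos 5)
Your second equality is fine, and in fact it is the only half this paper could have helped you with anyway: the theorem is not proved here but imported from \cite[Theorem 10.3]{RYYedge} (it is then \emph{used}, via the pigeonhole argument, in the proof of Proposition \ref{prop:countCompat}). Lemma \ref{lem:locAF} reduces the right-hand equality to $\#\mathcal{E}_{\rho_n}((p))=\binom{n}{p}$, and your diagonal-tracking count is correct: each $+$ stays on its diagonal $k\in\{0,\dots,p-1\}$, the local move preserves the weak increase $r_0\le r_1\le\cdots\le r_{p-1}$ of the row indices, every weakly increasing sequence with $r_{p-1}\le n-p+1$ is reachable by exciting the diagonals from right to left, and such multisets number $\binom{n}{p}$.

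The genuine gap is in the left-hand equality, and it is not merely that the work is deferred: the structural claim your plan rests on is false. You assert that for $T$ of shape $\lambda/\lambda$ (all labels on diagonal edges), $\rect(T)$ ``always consists of a single row in row $1$ together with some leftover labels on the first diagonal edge $e_1$,'' so that $\rect(T)=S_{(p)}$ iff $e_1$ empties. Take $\lambda=(3,2,1)$, $p=3$, and $T$ with $E_2(T)=\{1,2,3\}$ and $E_1(T)=E_3(T)=\emptyset$. Running the paper's row rectification (southmost inner corners $(3,3)$, $(2,3)$, $(2,2)$, $(1,3)$, $(1,2)$, $(1,1)$ in order) yields the straight-shape standard tableau with $1,2$ in row $1$ and $3$ in box $(2,2)$: two rows and no edge labels. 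So the dichotomy you want does not exist, and the actual characterization of which edge-distributions rectify to $S_{(p)}$ --- which is the entire content of $d_{\lambda,(p)}^{\lambda}=\binom{n}{p}2^{p-1}$ --- is untouched. Both of your routes to it are explicitly unexecuted: the reverse-slide bijection with ``exactly $p-1$ binary choices'' is only an expectation (reverse slides on edge-labeled tableaux are not even defined, and neither the count $2^{p-1}$ of choices nor the claim that the positional data forms an excited Young diagram is argued), while the fallback recursion $d_{\lambda,(p)}^{\lambda}=d_{\lambda',(p)}^{\lambda'}+2\,d_{\lambda',(p-1)}^{\lambda'}$ is verified only numerically, with the localization of slides to $e_n$ conceded to be ``itself the crux.'' As written, the proposal establishes ${\mathfrak d}_{\lambda,(p)}^{\lambda}=\binom{\ell(\lambda)}{p}2^{p-1}$ but not $d_{\lambda,(p)}^{\lambda}=\binom{\ell(\lambda)}{p}2^{p-1}$; for the latter you would need a correct rectification analysis of the kind carried out in \cite{RYYedge}, or in Section 3 of this paper for the shapes $\rho_{n,m}$.
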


\section{Proof of Theorem \ref{thm:locCoeffSkewSC}}
For $T\in \Tab(\nu/ \lambda,|\mu|)$, let $T(i,j)$ be the entry in box $(i,j)$ (in matrix coordinates) and $E_i(T)$ be the set of entries on the $i$th diagonal edge of $\nu$. Additionally, let 
\[{\sf col}_k(T)=  E_k(T)\cup \{ T(i,k) \ | (i,k)\in \nu/\lambda \text{ where } i\in[k]\}.\]
Define $U_{n,m}\in\Tab(\rho_n / \rho_n,|\rho_{n,m}|)$ by the property
$E_i(U_{n,m})={\sf col}_i(S_{\rho_{n,m}})$ for each $i\in[n]$. That is, the labels on the $i$th diagonal edge of $U_{n,m}$ are precisely the labels appearing in the $i$th column of $S_{\rho_{n,m}}$.
For $T\in \Tab(\rho_n / \rho_n,|\rho_{n,m}|)$ and $I\subseteq E_{h}(T)$ where $h\in[n-1]$, define the \emph{$I$-slide of $T$}, denoted ${\sf Sl}_{I}(T)$, by 
  \[E_i({\sf Sl}_{I}(T)):=\begin{cases}
	E_i(T) &\text{if } i\in [n]\setminus\{h,h+1\},\\
    E_{h}(T)\setminus {I} &\text{if } i=h,\\
	E_{h+1}(T)\cup {I} &\mbox{if } i=h+1. 
	\end{cases}\]
By definition, ${\sf Sl}_{I}(T)\in \Tab(\rho_n / \rho_n,|\rho_{n,m}|)$.

\begin{example}\label{ex:slDef}
Let $n=4$ and $m=3$. Taking $I=\{6\}\subseteq E_3(U_{4,3})=\{3,6,8\}$, below we illustrate ${\sf Sl}_I(U_{4,3})$.
\[\begin{picture}(420,90)
\put(0,40){$S_{\rho_{4,3}}=$
}
\put(30,65){$\tableauL{ 1 & 2 & 3 & 4 \\ & 5  & 6 & 7 &\\ &  &  8 &9 }$
}
\put(145,40){$U_{4,3}=$}
\put(170,65){$\tableauL{ \ & \ & \ & \ \\ & \  & \ & \ \ &\\ &  &  \ &\ \\ & &  & \ }$}
\put(175,62){$1$}
\put(193,43){$2 5$}
\put(209,23){$3 6 8$}
\put(227,03){$4 7 9$}
\put(280,40){${\sf Sl}_I(U_{4,3})=$}
\put(328,65){$\tableauL{ \ & \ & \ & \ \\ & \  & \ & \ \ &\\ &  &  \ &\ \\ & &  & \ }$}
\put(335,62){$1$}
\put(350,43){$2 5$}
\put(369,23){$3 8$}
\put(382,03){$4 6 7 9$}
\end{picture}\]      
\end{example}

We say $T\in \Tab(\rho_n / \lambda,|\rho_{n,m}|)$ is \emph{\bad{}} if there exists some $j\in {\sf col}_\ell(S_{\rho_{n,m}})\cap {\sf col}_{\ell'}(T)$ where $\ell'<\ell$.

\begin{proposition}\label{prop:badEdgeOrder}
If $T\in \Tab(\rho_n / \lambda,|\rho_{n,m}|)$ is \bad{}, ${\rect}(T)\neq  S_{\rho_{n,m}}$.
\end{proposition}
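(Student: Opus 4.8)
The plan is to isolate the single geometric feature of the slide rules that drives the proposition: under a jeu de taquin slide, no label is ever moved to a strictly later column. Concretely, for $T\in\Tab(\nu/\lambda,|\mu|)$ and a label $j$, let ${\sf c}_T(j)$ denote the unique index $k$ with $j\in {\sf col}_k(T)$. This is well defined since each label occurs exactly once, lying either in a single box $(i,k)$ (which I assign column index $k$) or on a single diagonal edge $E_i(T)$ (which I assign column index $i$), matching the definition of ${\sf col}_k$. The key claim I would prove is the monotonicity statement that for every inner corner ${\sf c}$ and every label $j$,
\[
{\sf c}_{{\sf jdt}_{\sf c}(T)}(j)\le {\sf c}_T(j).
\]

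First I would establish this by inspecting the four local moves, noting that a label is relocated only when it appears as $b$, $a$, or $s$ in the pictures. In (1) the label $b$ travels from box $(i,k)$ to box $(i-1,k)$, keeping column $k$; in (2) and (3) the label $a$ travels one box west, decreasing its column by one; and in (4) the label $s=\min(S)$ is pulled up off the diagonal edge $E_i$ into the diagonal box $(i,i)$, and since both the edge $E_i$ and the box $(i,i)$ carry column index $i$, the column is unchanged. Every other label is fixed by the move. Since the hole is only ever filled by a label lying immediately to its south or east, each relocated label moves one step north or west, and since no label is ever deposited onto a diagonal edge during a slide, no move can increase any ${\sf c}(j)$. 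This gives the displayed inequality.

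Because $\rect$ is by definition a composition of $|\lambda|$ successive slides ${\sf jdt}_{{\sf c}_0},{\sf jdt}_{{\sf c}_1},\ldots$, iterating the inequality yields ${\sf c}_{\rect(T)}(j)\le {\sf c}_T(j)$ for every label $j$. Now suppose $T$ is \bad{}, so there is a label $j$ with $j\in {\sf col}_\ell(S_{\rho_{n,m}})\cap {\sf col}_{\ell'}(T)$ and $\ell'<\ell$; that is, ${\sf c}_{S_{\rho_{n,m}}}(j)=\ell$ while ${\sf c}_T(j)=\ell'<\ell$. Combining with monotonicity gives ${\sf c}_{\rect(T)}(j)\le \ell'<\ell={\sf c}_{S_{\rho_{n,m}}}(j)$, so $\rect(T)$ and $S_{\rho_{n,m}}$ place $j$ in different columns, whence $\rect(T)\neq S_{\rho_{n,m}}$.

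I expect the only genuine subtlety to be the bookkeeping around the diagonal edges in the shifted setting, namely verifying in move (4) that promoting $s$ from $E_i$ into box $(i,i)$ preserves the column index; once the convention that edge $E_i$ occupies column $i$ is fixed to agree with the definition of ${\sf col}_i$, the remaining verification is a routine reading of the four slide rules. A secondary point worth stating explicitly is that a jeu de taquin slide never writes a label onto a diagonal edge, so a box label remains a box label and can only drift northwest, which is exactly what keeps ${\sf c}(j)$ monotone across the entire rectification.
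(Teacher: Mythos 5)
Your proof is correct and is essentially the paper's argument made explicit: the paper disposes of this proposition with the single remark that it ``follows by the definition of row rectification,'' and the content of that remark is precisely your monotonicity invariant, namely that each of the four slide rules moves a label either north within its column, one column west, or from a diagonal edge $E_i$ into the box $(i,i)$ (preserving membership in ${\sf col}_i$), so no label's column index can ever increase under $\rect$. Your case check of the four local moves, including the edge convention in move (4), is the right level of detail to fill in, and the conclusion follows exactly as you state.
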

\begin{proof}
This follows by the definition of row rectification.
\end{proof}

If $T\in \Tab(\rho_n / \rho_n,|\rho_{n,m}|)$ is not \bad{}, we say $T$ is \emph{\good{}}. Then \[T=({\sf Sl}_{I_{n-1}}\circ\ldots\circ{\sf Sl}_{I_1})(U_{n,m})\]
where $I_\ell\subseteq E_\ell(({\sf Sl}_{I_{\ell-1}}\circ\ldots\circ{\sf Sl}_{I_1})(U_{n,m}))$.
We call $\mathcal{I}(T):=(I_1,I_2,\ldots,I_{n-1})$ a \emph{slide decomposition} of $T$. For example taking $T={\sf Sl}_I(U_{4,3})$ from Example \ref{ex:slDef}, $\mathcal{I}(T)=(\emptyset,\emptyset,\{6\})$. Note that by the condition $I_\ell\subseteq E_\ell(({\sf Sl}_{I_{\ell-1}}\circ\ldots\circ{\sf Sl}_{I_1})(U_{n,m}))$, $\mathcal{I}(T)$ is unique.

Suppose $T\in\Tab(\rho_n / \rho_n,|\rho_{n,m}|)$ is \good{}. For some $h\in[n-1]$ where $i>h+1$ implies $\mathcal{I}(T)_i=\emptyset$, choose $j\in I\subseteq E_h(T)$. 
Define the operator ${\sf shift}_j$ on $\widetilde{T}\in\{{\sf Sl}_I(T)_i\}_{i\in[\binom{n+1}{2}]}$ such that if $j\in {\sf col}_{h+1}(\widetilde{T})$ and $\widetilde{T}(h,h)<j$:  
\[{\sf shift}_j( \widetilde{T}(r,c)):=\begin{cases}
	\widetilde{T}(r,c) &\text{if } c\neq h+1,\\
	\widetilde{T}(r,c) &\text{if } c=h+1 \mbox{ and } \widetilde{T}(r,c)<j,\\
    \widetilde{T}(r+1,c) &\text{if } c=h+1, \widetilde{T}(r,c)\geq j, \mbox{ and }  r\leq h,\\
    \min\{E_{h+1}(\widetilde{T})\} &\text{if } c=h+1, \widetilde{T}(r,c)\geq j, \mbox{ and }  r=h+1. 
	\end{cases}\]
\[{\sf shift}_j (E_{i}(\widetilde{T})):=\begin{cases}
	 E_{i}(\widetilde{T}) &\text{if } i\in[n]\setminus\{h,h+1\},\\
    E_{h}(\widetilde{T})\cup \{j\} &\text{if } i=h,\\
	E_{h+1}(\widetilde{T})\setminus \min_{a\geq j}\{a\in E_{h+1}(\widetilde{T}) \} &\mbox{if } i=h+1.
	\end{cases}\]
Otherwise, 
${\sf shift}_j$ acts trivially. In short, when nontrivial, ${\sf shift}_j$ moves $j$ from column $h+1$ to $E_{h}(T)$ and moves labels in column $h+1$ up accordingly. 
For $J=\{j_1<\ldots<j_{\ell}\}\subseteq I$ define \[{\sf shift}_J(\widetilde{T}):={\sf shift}_{j_{\ell}}\circ\ldots\circ {\sf shift}_{j_{1}}(\widetilde{T}).\]
By construction, ${\sf shift}_J(\widetilde{T})\in  \Tab(\rho_n / \lambda,|\rho_{n,m}|)$ for some $\lambda\subseteq \rho_n $. 

\begin{example}
Taking $T$ as below and $\widetilde{T}={\sf Sl}_{\{8\}}(T)_7$, we have
\[\begin{picture}(430,85)
\put(0,45){$T=$}
\put(25,65){$\tableauL{ \ & \ & \ & \ \\ & \  & \ & \ \ &\\ &  &  \ &\ \\ & &  & \ }$
}
\put(48,42){$2 5$}
\put(58,23){$1 3 6 8$}
\put(76,03){$4 7 9 10$}
\put(145,45){$\widetilde{T}=$}
\put(170,65){$\tableauL{ \ & \ & \ & 4 \\ &  1 & 3 & 7\\ &  &  6 & 8 \\ & &  & 9 }$}
\put(193,42){$2 5$}
\put(231,03){$10$}
\put(285,45){${\sf shift}_{\{8\}}(\widetilde{T})=$}
\put(340,65){$\tableauL{ \ & \ & \ & 4 \\ &  1 & 3 & 7\\ &  &  6 & 9 \\ & &  & 10 }$}
\put(363,42){$2 5$}
\put(384,23){$8$}
\put(420,7){$.$}
\end{picture}\]      
\end{example}

\begin{proposition}\label{prop:sameJDT}
Suppose $T\in \Tab(\rho_n / \rho_n,|\rho_{n,m}|)$ is \good{} and $h\in[n-1]$ such that for $i\geq h$, $\mathcal{I}(T)_i=\emptyset$.
Let $J\subseteq I\subseteq E_{h}(T)$, where $\min I\in J$. 
Then 
\[{\sf Sl}_{I\setminus J}(T)_{k'}={\sf shift}_J ({\sf Sl}_I(T)_{k'}),\] for $k'< k=\min\{i  \ | \ {\sf Sl}_{I\setminus J}(T)_{i}(h,h)=\min I\}$.
\end{proposition}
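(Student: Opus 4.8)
The plan is to prove the identity by induction on the number $k'$ of jeu de taquin slides, showing that ${\sf shift}_J$ intertwines the two row rectifications below the threshold $k$. Write $A_{k'}:={\sf Sl}_{I\setminus J}(T)_{k'}$ and $B_{k'}:={\sf Sl}_I(T)_{k'}$, so that the goal is $A_{k'}={\sf shift}_J(B_{k'})$ for every $k'<k$. In the base case $k'=0$ both tableaux are supported on the diagonal edges and differ only in that $J$ lies on $E_{h+1}$ in $B_0$ but on $E_h$ in $A_0$; since no box of column $h+1$ is yet filled, each ${\sf shift}_j$ collapses to its edge action and moves $j$ from $E_{h+1}$ back to $E_h$, giving ${\sf shift}_J(B_0)=A_0$. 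Here I adopt the convention that an unfilled box $(h,h)$ is smaller than every label, so that the firing condition $\widetilde T(h,h)<j$ is met.

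For the inductive step I would first record that ${\sf shift}_j$ preserves the \emph{shape} of its argument: if $j$ lies on $E_{h+1}$ it only relabels edges, and if $j$ lies in a box of column $h+1$ then the portion of that column from $j$'s row down to $(h+1,h+1)$ is shifted up and refilled from $E_{h+1}$, so the set of filled boxes is unchanged. Hence the inductive hypothesis $A_{k'}={\sf shift}_J(B_{k'})$ forces $A_{k'}$ and $B_{k'}$ to have equal shape, and the row-rectification rule selects the same southmost inner corner $\sf c$ for both. The inductive step thus reduces to the single-slide commutation
\[{\sf jdt}_{\sf c}\bigl({\sf shift}_J(B_{k'})\bigr)={\sf shift}_J\bigl({\sf jdt}_{\sf c}(B_{k'})\bigr).\]
To apply it I must check that the firing conditions persist: one tracks that each $j\in J$ still lies in column $h+1$ of $B_{k'}$, and one verifies that the value in box $(h,h)$ stays strictly below $\min I$ for all $k'<k$ (reflecting that $\min I=\min J$ and that the labels of $E_h$ reach $(h,h)$ in increasing order). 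Since ${\sf shift}_J$ never touches column $h$, the value in $(h,h)$ agrees in $A_{k'}$ and $B_{k'}$, and this gives $\widetilde T(h,h)<\min I\le j$ for every $j\in J$ throughout the computation, so every ${\sf shift}_j$ fires.

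The heart of the argument, and the step I expect to be the main obstacle, is the single-slide commutation displayed above. I would prove it by a case analysis on the slide path of ${\sf c}$ in $B_{k'}$: when the path meets neither column $h+1$ nor edge $h$, the two operators act on disjoint cells and commute at once; the essential case is when the path promotes a label up column $h+1$, where one checks that promoting in $B_{k'}$ and then deleting the $J$-labels—together with the compensating refill of $(h+1,h+1)$ from $E_{h+1}$—produces the same tableau as deleting the $J$-labels first and then promoting in $A_{k'}$. This is precisely where the refill clause in the definition of ${\sf shift}_j$ is engineered to mirror the jeu de taquin dynamics, and it is also where the hypotheses $\mathcal I(T)_i=\emptyset$ for $i\ge h$ (so $E_{h+1}$ is undisturbed by earlier slides) and $\min I\in J$ (so $\min I$ anchors the binding precondition) are used. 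Finally, the commutation fails exactly at step $k$: once $\min I$ is pulled into box $(h,h)$ of the rectification of ${\sf Sl}_{I\setminus J}(T)$, the condition $\widetilde T(h,h)<\min I$ is violated, ${\sf shift}_{\min I}$ acts trivially, and the two rectifications diverge—which is why the equality is asserted only for $k'<k$.
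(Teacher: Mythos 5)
Your overall strategy coincides with the paper's: induction on $k'$, with the inductive step reducing to tracking how a single jeu de taquin path interacts with columns $h$ and $h+1$, and exploiting the fact that $\min I$ remains on the diagonal edge $E_h$ until step $k$. The base case and the shape-preservation and firing-condition bookkeeping you describe also match what the paper does implicitly.

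The gap is inside what you call the essential case. Set $M=\min\{i\in E_{h+1}({\sf Sl}_{I\setminus J}(T))\ |\ i>\min I\}$; by the inductive hypothesis the tableaux ${\sf Sl}_{I\setminus J}(T)_{k'-1}$ and ${\sf Sl}_I(T)_{k'-1}$ can differ only at edge labels of $E_h$ and at entries weakly below $M$ in column $h+1$. Your claimed identity --- promote up column $h+1$, then apply ${\sf shift}_J$, equals apply ${\sf shift}_J$, then promote --- holds only when the jeu de taquin path enters column $h+1$ strictly above the row containing $M$. If the path entered column $h+1$ at or below that row, the slide would pull a label from the differing region westward into column $h$, after which the discrepancy between the two rectifications is no longer confined to $E_h$ and column $h+1$, and ${\sf shift}_J$ cannot repair it. The paper devotes its Case 1 to excluding exactly this: such a slide would place a label $m'\geq M>\min I$ into a box of column $h$ while $\min I$ still sits on $E_h$ below it (because $k'<k$), contradicting the requirement that edge labels exceed everything directly north of them. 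Your observation that the entry of box $(h,h)$ stays below $\min I$ is the right germ of this argument, but you deploy it only to verify the firing condition of ${\sf shift}_j$, not to rule out the bad path; without that exclusion the single-slide commutation you rely on is unproved. The remainder of your sketch lines up with the paper's Case 2, which is precisely your ``refill mirrors the jeu de taquin dynamics'' computation using $\mathcal{I}(T)_i=\emptyset$ for $i\geq h$.
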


\begin{proof}
We proceed by induction on $k'$. 
Let $\min I=j$ and
\[M=\min\{i\in E_{h+1}({\sf Sl}_{I\setminus J}(T)) \ | \ i>j\}.\]
For $k'=0$, the result is trivial by the definitions of ${\sf shift}_{J}$ and ${\sf Sl}_I$.
Suppose the statement holds for some $k'-1$ where $k'<k$.  
Let ${\sf path}({{\sf Sl}_{I\setminus J}}(T)_{k'-1})$
be the sequence of boxes that $\bullet$ occupies in computing ${{\sf Sl}_{I\setminus J}}(T)_{k'}$ from ${{\sf Sl}_{I\setminus J}}(T)_{k'-1}$. 
By the inductive assumption and definition of ${\sf shift}_J$, ${{\sf Sl}_{I\setminus J}}(T)_{k'-1}$ and ${\sf Sl}_I(T)_{k'-1}$ may only differ at edge labels in column $h$ and labels weakly below $M$ in column $h+1$. 
Thus we need only consider when ${\sf path}({\sf Sl}_{I\setminus J}(T)_{k'-1})$ intersects columns $h$ and $h+1$.

By the definitions of $k$ and $M$, if $M\in {\sf Sl}_{I\setminus J}(T)_{k'-1}(h+1,h+1)$ or $M\in E_{h+1}({\sf Sl}_{I\setminus J}(T)_{k'-1})$ the result follows by the inductive assumption.
Thus assume $M\in {\sf Sl}_{I\setminus J}(T)_{k'-1}(r,h+1)$ for some $r\in[h]$.
Let 
\[y=\min\{r' \ | \ (r',h+1)\in {\sf path}({\sf Sl}_{I\setminus J}(T)_{k'-1})\}.\]
\noindent
	{\sf Case 1:} ($y$ does not exist or $y\geq r$).
If $y$ does not exist, the result follows by the inductive assumption and definition of ${\sf shift}_J$. Otherwise this implies that ${\sf Sl}_{I\setminus J}(T)_{k'}(y,h)=m'$, where $m'\geq M>j$ since $y\geq r$. Since $k'<k$, $j\in E_{h}({\sf Sl}_{I\setminus J}(T)_{k'})$. Thus ${\sf Sl}_{I\setminus J}(T)_{k'}$ is not increasing down columns, so we cannot have $y\geq r$. 

\noindent
	{\sf Case 2:} ($y<r$).
	By assumption for $i\geq h$, $\mathcal{I}(T)_i=\emptyset$, so $E_{i}(T)=E_{i}(U_{n,m})$ for $i\geq h$.
	Thus this rectification simply moves up those entries in column $h+1$ below row $y$. The result follows by the inductive assumption and definition of ${\sf shift}_J$.
\end{proof}

We say $T\in \Tab(\rho_n / \rho_n,|\rho_{n,m}|)$ is $r$-\emph{compatible} if for each $j=S_{\rho_{n,m}}(r',c)$ with $r'\in[r]$
\begin{equation}\label{eq:nCompat}
    j\not\in T_i(c'-2,c'), \text{ for any } c'>c
\end{equation}
 where $i\in [\binom{n+1}{2}]$.
For $r\in[m]$, let \[{\sf row}_r(S_{\rho_{n,m}}):=\{\text{entries in row } r \text{ of } S_{\rho_{n,m}} \}=\big\{j+ \sum_{i=0}^{r-2}n-i \ | \ j\in[n-r+1]\big\}.\]
For $T\in \Tab(\rho_n / \rho_n,|\rho_{n,m}|)$ \good{} and $k\in[n-1]$, we say $\mathcal{I}(T)_k$ is
\emph{slidable} if for 
\begin{align}
    T^{(k)}&:=({\sf Sl}_{\mathcal{I}(T)_{k-1}}\circ\ldots\circ{\sf Sl}_{\mathcal{I}(T)_1})(U_{n,m}),\nonumber  \\
    \mathcal{I}(T)_k & \subseteq \bigcup_{i=1}^k\{\min (E_{k}(T^{(k)})\cap {\sf row}_i(S_{\rho_{n,m}}))\}\label{eqn:n-slidable}.
\end{align}
If $\mathcal{I}(T)_k$ is slidable for each $k\in[n-1]$, we say $\mathcal{I}(T)$ is slidable.
\begin{example} For $T$ as below, we compute the right hand side of Equation (\ref{eqn:n-slidable}) for $k=3$:
\[\bigcup_{k=1}^3\{\min (E_{3}(T)\cap {\sf row}_k(S_{\rho_{4,4}}))\}=\bigcup_{k=1}^3\{\min (\{1,3,5,6,8\}\cap {\sf row}_k(S_{\rho_{4,4}}))\}=\{1,5,8\}.\]
To the right we have $S_{\rho_{4,4}}$ with the entries of $E_{3}(T)$ in shaded where the slidable entries $\{1,5,8\}$ are shaded lighter. By  Equation (\ref{eqn:n-slidable}), any subset $I$ of the lighter shaded fillings is slidable, so $\{1,8\}$ is slidable but $\{1,3\}$ is not.
        \[\begin{picture}(250,90)
\put(0,45){$T=$}
\put(25,65){$\tableauL{ \ & \ & \ & \ \\ & \  & \ & \ \ &\\ &  &  \ &\ \\ & &  & \ }$}
\put(50,43){$2$}
\put(53,23){$1 3 5 6 8$}
\put(78,3){$4 7 9 10$}
\put(150,65){$\ytableausetup
{boxsize=1.6em}\begin{ytableau}
 *(ltrBlue)1 & 2  & *(ltBlue) 3  & 4\\
\none  & *(ltrBlue)5 & *(ltBlue) 6 & 7\\
\none  & \none & *(ltrBlue)8 & 9\\
 \none & \none & \none & 10 
\end{ytableau}$}
\end{picture}\]   
\end{example}
\begin{proposition}\label{prop:sljSyncAtKSlidable}
Suppose $T\in \Tab(\rho_n / \rho_n,|\rho_{n,m}|)$ is \good{} and $h\in[n-1]$ 
such that for $i\geq h$, $\mathcal{I}(T)_i=\emptyset$. 
Let $J\subseteq I\subseteq E_{h}(T)$ denote the minimal $\# J$ elements of $I$.
Then if $J$ is slidable and $T$ is $r-1$-compatible where $\max J\in {\sf row}_r(S_{\rho_{n,m}})$:
\begin{enumerate}
    \item ${\sf Sl}_I(T)_{k}={\sf Sl}_{I\setminus J}(T)_{k}$, where $k=\min\{i  \ | \ \max J={\sf Sl}_{I\setminus J}(T)_i(h,h)\}$, and 
    \item ${\rect}({\sf Sl}_{I}(T))={\rect}({\sf Sl}_{I\setminus J}(T))$.
\end{enumerate}
\end{proposition}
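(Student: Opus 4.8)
The plan is to obtain part (2) as a formal consequence of part (1), and to prove part (1) by continuing the inductive analysis of Proposition \ref{prop:sameJDT} past the step at which $\min J$ first reaches the diagonal box $(h,h)$.

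For part (2), suppose part (1) gives ${\sf Sl}_I(T)_k = {\sf Sl}_{I\setminus J}(T)_k$. I would then argue that row rectification beyond step $k$ is a deterministic function of the tableau at step $k$: the southmost inner corner is determined by the (now common) shape, and each ${\sf jdt}$ slide is itself deterministic. Hence the two rectification streams coincide from step $k$ through completion, so ${\rect}({\sf Sl}_I(T)) = {\rect}({\sf Sl}_{I\setminus J}(T))$, which is part (2).

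For part (1), write $J = \{j_1 < \cdots < j_\ell\}$, so $j_\ell = \max J \in {\sf row}_r(S_{\rho_{n,m}})$; since entries of $S_{\rho_{n,m}}$ increase through reading order and $J$ is slidable (hence carries at most one label per row), the remaining $j_1,\dots,j_{\ell-1}$ lie in distinct rows $< r$. Set $k_0 = \min\{i : {\sf Sl}_{I\setminus J}(T)_i(h,h) = j_1\}$. On the range $k' < k_0$ I would invoke Proposition \ref{prop:sameJDT} directly, so that ${\sf Sl}_{I\setminus J}(T)_{k'} = {\sf shift}_J({\sf Sl}_I(T)_{k'})$, the only discrepancy being that the labels of $J$ sit on diagonal edge $h$ in ${\sf Sl}_{I\setminus J}(T)$ but in column $h+1$ in ${\sf Sl}_I(T)$. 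For $k_0 \le k' \le k$ I would maintain the refined invariant ${\sf Sl}_{I\setminus J}(T)_{k'} = {\sf shift}_{J^{(k')}}({\sf Sl}_I(T)_{k'})$, where $J^{(k')} \subseteq J$ records those elements of $J$ still lying in column $h+1$ of ${\sf Sl}_I(T)_{k'}$. Because edge-$h$ labels are extracted into $(h,h)$ in increasing order, the $j_s$ reach the diagonal one at a time in ${\sf Sl}_{I\setminus J}(T)$; I would show each such arrival is mirrored in ${\sf Sl}_I(T)$ by a slide of type (2) that carries $j_s$ westward from column $h+1$ into column $h$, so $J^{(k')}$ loses $j_s$ and the invariant persists. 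At step $k$ the final element $j_\ell$ transfers, $J^{(k)} = \emptyset$, and ${\sf shift}_\emptyset$ is the identity, giving the desired ${\sf Sl}_I(T)_k = {\sf Sl}_{I\setminus J}(T)_k$.

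The main obstacle is the transfer step itself: verifying that when $j_s$ arrives at $(h,h)$ in ${\sf Sl}_{I\setminus J}(T)$, the matching ${\sf jdt}$ path in ${\sf Sl}_I(T)$ actually routes $j_s$ west rather than leaving it in column $h+1$ or displacing a different label. This is exactly where the hypotheses are consumed. Slidability of $J$ forces each $j_s$ to be the minimal edge-$h$ label in its row of $S_{\rho_{n,m}}$, which guarantees the comparison $a<b$ in rule (2) triggers the westward move. The $(r-1)$-compatibility of $T$ governs the sub-maximal elements $j_1,\dots,j_{\ell-1}$, all in rows $<r$: it ensures no blocking label occupies a box of the form $(c'-2,c')$ that would divert the path, which is precisely the obstruction excluded by Equation (\ref{eq:nCompat}). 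I expect this to require a case analysis paralleling Cases 1 and 2 of Proposition \ref{prop:sameJDT}, now run at each of the $\ell$ transfer steps, with compatibility supplying the inductive control over the intermediate labels that the single-slide argument there did not require.
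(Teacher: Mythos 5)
Your overall architecture coincides with the paper's: part (2) is deduced from part (1) by the determinism of row rectification, and part (1) is obtained by transferring the elements of $J$ one at a time in increasing order, with Proposition \ref{prop:sameJDT} keeping the two rectification streams synchronized (up to ${\sf shift}$) between transfers. The paper organizes this as an induction on $\#J$ reducing to the singleton $J=\{j\}$ with $j=\max J$ (which is the minimum of the remaining set $I\setminus(J\setminus\{j\})$), rather than your running invariant ${\sf shift}_{J^{(k')}}$ with a shrinking $J^{(k')}$, but these are the same idea differently packaged.

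The genuine gap is that the transfer step --- which you correctly flag as ``the main obstacle'' --- is the entire content of the proposition, and you do not prove it; you only predict that a case analysis will work. Worse, the mechanism you sketch is not the one that closes the argument. What must be shown is that just before step $k$ the label $j$ occupies the box of column $h+1$ adjacent to the diagonal, equivalently that $j$ exceeds the entry $c$ in box $(h-1,h+1)$ of ${\sf Sl}_{I\setminus\{j\}}(T)_k$. The paper pins this down as follows: $(r-1)$-compatibility (together with $\mathcal{I}(T)_i=\emptyset$ for $i\geq h$) forces the entries $a_i=S_{\rho_{n,m}}(i,h)$ for $i\leq r-1$ to lie in column $h$ and the entries $m_i=S_{\rho_{n,m}}(i,h+1)$ to lie in column $h+1$ of the partially rectified tableau; slidability of $\{j\}$ then forces the entry $b$ at $(h-1,h)$ to be $a_{r-1}$, whence $c=m_{r-1}<j$. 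Your attribution of the hypotheses --- slidability ``guarantees the comparison $a<b$ in rule (2)'' and compatibility ``ensures no blocking label occupies a box of the form $(c'-2,c')$'' --- does not identify the entries $b$ and $c$ at all, and without that identification the assertion that the ${\sf jdt}$ path routes $j$ westward is unsupported. To complete your proposal you would need to supply exactly this local computation at each of your $\ell$ transfer steps (or, more economically, adopt the paper's induction so that it only has to be done once, for a singleton).
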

\begin{proof}
We proceed by induction on $\#J=\ell$. If $J=\emptyset$, the result is trivial. Consider $\#J=\ell\geq 1$ and suppose the statement holds for $\#J<\ell$. Let $j=\max J$. Using the inductive assumption, it suffices to prove the statement when $J=\{j\}$. By Proposition \ref{prop:sameJDT} and the definition of $k$, it follows that ${\sf Sl}_I(T)_{k-1}(n-s,n)=j$ for some $s\geq 1$. 
We will first show $s=1$.

For $i\in[n]$, let $a_{i}=S_{\rho_{n,m}}(i,h)$,  $m_{i}=S_{\rho_{n,m}}(i,h+1)$, and if $h+2\leq n$, $q_{i}=S_{\rho_{n,m}}(i,h+2)$. 
Since $T$ is $r-1$-compatible, 
$a_{i}\in {\sf col}_{h}({\sf Sl}_{I\setminus\{j\}}(T)_{k})$ for $i\in[r-1]$ by the definition of $j$.
Since $\mathcal{I}(T)_i=\emptyset$ for $i\geq h$, $m_{i}\in {\sf col}_{h+1}({\sf Sl}_{I\setminus\{j\}}(T)_{k})$
and $q_{i}\in {\sf col}_{h+2}({\sf Sl}_{I\setminus\{j\}}(T)_{k})$
for $i\in[n]$. Thus by the minimality of $j$ in $I$ and the definition of $k$, ${\sf Sl}_{I\setminus\{j\}}(T)_{k}$ has the form below, where $r_i:=r-i$. By Proposition \ref{prop:sameJDT} and the definition of $k$, showing $s=1$ is equivalent to showing $j>c$.

\begin{equation*}\label{eq:almostSynced}
\begin{picture}(120,150)
\put(0,130){$\ytableausetup
{boxsize=2.1em}\begin{ytableau}
\ddots &  \vdots & \vdots & \vdots & \ddots\\
\ddots &  a_{r_i}  & m_{r_i} & q_{r_i} & \ddots\\
\ddots &  b & c & q_{r_{i}+1}  & \ddots\\
\none  &  j & d & q_{r_{i}+2} & \ddots\\
\none & \none & e & q_{r_{i}+3} & \ddots\\
\none & \none & \none & \ddots & \ddots
\end{ytableau}$
}
\end{picture}
\end{equation*}

If $b$ does not exist, by the definition of row rectification and since $\mathcal{I}(T)_i=\emptyset$ for $i\geq h$, it follows that $j>c$.
Otherwise, since $\{j\}$ is slidable, $b\in {\sf row}_{r'}(S_{\rho_{n,m}})$ where $r'<r$. 
Since $a_{r_1}\in {\sf col}_{h}({\sf Sl}_{I\setminus\{j\}}(T)_{k})$ and $a_{r_i}\leq a_{r_1}<j$, this forces $b=a_{r_1}$.
Since $m_{r_1}\in {\sf col}_{h+1}({\sf Sl}_{I\setminus\{j\}}(T)_{k})$ this implies that $c=m_{r_1}$, so $j>c$. 

Thus ${\sf Sl}_I(T)_{k-1}(n-1,n)=j$. Then (1) follows by Proposition \ref{prop:sameJDT} applied to $k-1$ and the definition of ${\sf jdt}$. Lastly (2) follows directly from (1).
\end{proof}

\begin{claim}\label{claim:compatSl}
Suppose  $T\in \Tab(\rho_n / \rho_n,|\rho_{n,m}|)$ is \good{}.
If $\mathcal{I}(T)_i\cap \bigcup_{k=1}^{r} {\sf row}_k(S_{\rho_{n,m}})$ is slidable for each $i\in[n-1]$, then $T$ is $r$-compatible.
\end{claim}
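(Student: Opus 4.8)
The plan is to induct on $r$. The base case $r=0$ holds vacuously, since no entry $j=S_{\rho_{n,m}}(r',c)$ has $r'\in[0]=\emptyset$. For the inductive step I would first record that slidability is inherited by subsets: if $X\subseteq E_k(T^{(k)})$ satisfies the containment in Equation (\ref{eqn:n-slidable}) and $Y\subseteq X$, then so does $Y$. Because the blocks ${\sf row}_k(S_{\rho_{n,m}})$ are nested as $\bigcup_{k=1}^{r-1}\subseteq\bigcup_{k=1}^{r}$, the hypothesis at level $r$ forces the hypothesis at level $r-1$ (with the same intermediate tableaux $T^{(k)}$, which depend only on $\mathcal{I}(T)_1,\dots,\mathcal{I}(T)_{k-1}$). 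The inductive hypothesis then gives that $T$ is $(r-1)$-compatible, and by the same reasoning every prefix tableau $\widehat T=({\sf Sl}_{\mathcal{I}(T)_{h-1}}\circ\dots\circ{\sf Sl}_{\mathcal{I}(T)_1})(U_{n,m})$ is $(r-1)$-compatible as well. Since $(r-1)$-compatibility already establishes Equation (\ref{eq:nCompat}) for every $j$ in rows $1,\dots,r-1$, it remains to verify it for the new entries $j=S_{\rho_{n,m}}(r,c)$ lying in row $r$.

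Fix such a $j$, with target column $c$, and recall that the blocks ${\sf row}_k(S_{\rho_{n,m}})$ are contiguous and increasing in $[|\rho_{n,m}|]$; hence the elements of any slide set weakly below $j$ form an initial segment meeting only rows $\le r$. I would strip $j$ (together with the smaller elements it drags along, as required by the minimal-elements condition) off the top of the slide decomposition one edge at a time. Concretely, writing the topmost nonempty slide as $I=\mathcal{I}(T)_h$ over a prefix $\widehat T$ with empty slides above edge $h$, and taking $J\subseteq I$ to be its minimal elements with $\max J\in{\sf row}_r(S_{\rho_{n,m}})$, the level-$r$ slidability hypothesis makes $J$ slidable and the $(r-1)$-compatibility of $\widehat T$ lets me invoke Proposition \ref{prop:sljSyncAtKSlidable}. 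Its part (1), combined with Proposition \ref{prop:sameJDT}, synchronizes the rectification sequences of ${\sf Sl}_I(\widehat T)$ and ${\sf Sl}_{I\setminus J}(\widehat T)$ box for box up to the step at which $\max J$ reaches the diagonal, so occupancy of any box $(c'-2,c')$ is the same in both. Iterating this removal downward — each stage inheriting the hypotheses by subset-closure of slidability and by the prefix $(r-1)$-compatibility above — reduces Equation (\ref{eq:nCompat}) for $j$ to the same statement in the tableau in which $j$ has never been slid, i.e.\ sits on edge $E_c$. There $j$ rectifies into column $c$ past correctly placed smaller entries (fixed by $(r-1)$-compatibility) and larger entries lying weakly to the lower right, so its jdt path stays weakly left of column $c$ in every row and never enters $(c'-2,c')$ with $c'>c$.

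The main obstacle is that $r$-compatibility constrains every intermediate tableau $T_i$, not merely $\rect(T)$, so Proposition \ref{prop:sljSyncAtKSlidable}(2) — which equates only final rectifications — is not enough by itself; the argument genuinely rests on the finer, box-by-box synchronization of Proposition \ref{prop:sameJDT} and Proposition \ref{prop:sljSyncAtKSlidable}(1). Two points demand care. First, the bookkeeping of the peeling: one must check at each stage that $J$ really is the set of minimal elements of the current top slide with $\max J$ in row $r$, and that the ``empty slides above edge $h$'' hypothesis of Proposition \ref{prop:sljSyncAtKSlidable} can be arranged, processing the slides of $j$ from the top down. Second, and most delicate, is the terminal geometric check that an unslid (hence row-minimal, slidable) entry $j$ on edge $E_c$ follows a rectification path confined to columns $\le c$ in the relevant rows — equivalently that the boxes $(c'-2,c')$ with $c'>c$, which are exactly the positions from which a later diagonal slide could misroute $j$, are never visited.
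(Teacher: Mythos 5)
Your proposal takes essentially the same route as the paper: an outer induction on $r$ with the trivial base case, the inductive hypothesis supplying $(r-1)$-compatibility of the prefix tableaux $T^{(\ell)}$, and then an inner induction over the edges that applies Propositions \ref{prop:sameJDT} and \ref{prop:sljSyncAtKSlidable} to show the row-$r$ entries cannot violate Equation (\ref{eq:nCompat}). The paper's own proof is a two-sentence sketch of exactly this argument, so your additional bookkeeping (subset-closure of slidability, the peeling of slides, the terminal check for an unslid entry) is a faithful elaboration rather than a different approach.
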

\begin{proof}
Let $J_i^r=\mathcal{I}(T)_i\cap \bigcup_{k=1}^{r} {\sf row}_k(S_{\rho_{n,m}})$.
We proceed by induction on $r$. For $r=0$ the result is trivial. Suppose the result holds for some $r-1\geq 0$ and $J_i^r$ is slidable for each $i\in[n-1]$. 
By the inductive assumption, $T^{(\ell)}$ is $r-1$-compatible for each $\ell\in[n-1]$. By inducting again on $\ell$ and applying Proposition \ref{prop:sljSyncAtKSlidable}, no entries in $J_i^r$ violate the compatibility condition in $T^{(\ell)}$, so the result follows.
\end{proof}

\begin{proposition}\label{prop:countCompat}
 Suppose $T\in \Tab(\rho_n / \rho_n,|\rho_{n,m}|)$. Then 
${\rect}(T)=S_{\rho_{n,m}}$ if and only if $T$ is \good{} such that $\mathcal{I}(T)$ is slidable.
\end{proposition}
\begin{proof}
$(\Leftarrow)$ 
 Using Claim \ref{claim:compatSl}, Proposition \ref{prop:sljSyncAtKSlidable} for $\mathcal{I}(T)_i=J$ for $i\in[n-1]$ gives
\[{\rect}(T)={\rect}(T^{(n-1)})={\rect}(T^{(n-2)})=\ldots={\rect}(T^{(1)})={\rect}(U_{n,m})=S_{\rho_{n,m}}.\]

$(\Rightarrow)$  
If $T$ is \bad{}, ${\rect}(T)\neq S_{\rho_{n,m}}$  by Proposition \ref{prop:badEdgeOrder}, so assume $T$ is \good{}. 
Let $j=S_{\rho_{n,m}}(r,c)$ be minimal in $[\binom{n+1}{2}]$ such that $j\in \mathcal{I}(T)_{i'}$ for some $i'\in[n-1]$ where $\{j\}$ not slidable. 
{By Claim \ref{claim:compatSl}, Proposition \ref{prop:sljSyncAtKSlidable}, and the minimality of $j$, without loss of generality, we may assume if $j'=S_{\rho_{n,m}}(r',c')$ for $r'<r$, $j'\not\in \mathcal{I}(T)_i$ for $i\in[n-1]$.} Thus
by the definition of row rectification, it suffices to consider the case when $r=1$.
This gives that $\mathcal{I}(T)_{i'}\cap[n]$ is not slidable.

Consider the set of all possible $\mathcal{I}(T')$ for \good{} $T'\in \Tab(\rho_n / \rho_n,|\rho_{n,m}|)$.
For each such $T'$, let $J(T')=(J_1',J_2',\ldots,J_{n-1}')$, where $J_i':=\mathcal{I}(T')_i'\cap [n]$. By the uniqueness of $\mathcal{I}(T')$, distinct $J(T')$ give distinct labelings of the diagonal edges of $\rho_n$ with $[n]$.

For each $i\in[n-1]$, there are 2 ways to choose slidable ${J}_i'$ for $T'$: \[{J}_i'=\{\min E_i(T'^{(i)})\} \text{ \ or \  }{J}_i'=\emptyset.\]
Thus there are $2^{n-1}$ possible slidable choices for $J(T')$.

Let $\Phi$ denote the restriction of $T'$ to entries in $[n]$. By the previous direction, if $\mathcal{I}(T')$ is slidable, ${\rect}(T')= S_{\rho_{n,m}}$, so ${\rect}(\Phi(T'))= S_{(n)}$. 
By Theorem \ref{thm:locPieri}, $d_{\rho_n,(n)}^{\rho_n}=\binom{\ell(\rho_n)}{n}2^{n-1}=2^{n-1}$. 
 Since there are $2^{n-1}$ slidable $J(T')$, these $J(T')$ precisely construct the labelings of the diagonal edges of $\rho_n$ with $[n]$ that count $d_{\rho_n,(n)}^{\rho_n}$.
 Thus by the pigeonhole principle, since the labelings of $[n]$ determined by these $2^{n-1}$ slidable $J(T')$ are distinct and $J(T)$ is not slidable, ${\rect}(\Phi(T))\neq S_{(n)}$, so ${\rect}(T)\neq S_{\rho_{n,m}}$.
\end{proof}

\begin{example}\label{ex:slideIllustration} Below are constructions of $T$ and $T'$ with  $\mathcal{I}(T)_k$ and $\mathcal{I}(T')_k$ given above each arrow. Beneath each arrow, entries of $E_k(T)$ are shaded where those entries in the right hand side of Equation (\ref{eqn:n-slidable}), \emph{i.e.} the slidable entries,
shaded lighter. 
\begin{center}
\begin{tikzpicture}
\node  at (0,0) {\begin{tikzpicture}
\node at (0,0) {$\begin{picture}(50,60)
\put(0,45){$\tableauL{ \ & \ & \ \\ & \  & \  \\ &  &  \ }$}
\put(5,43){$1$}
\put(22,22){$2 4$}
\put(39,3){$3 5 6$}
\end{picture}$};
\end{tikzpicture}};
\node at (3,1.75){\rotatebox{20}{$\xrightarrow{\hspace*{1cm}\mathcal{I}(T)_1=\emptyset\hspace*{1cm}}$}};
\node at (3,0.75){\tiny$\ytableausetup
{boxsize=1.1em}\begin{ytableau}
*(ltrBlue)1  &  2  & 3  \\
\none & 4 & 5 \\
 \none & \none & 6
\end{ytableau}$};
\node  at (6,2) {\begin{tikzpicture}
\node at (0,0) {$\begin{picture}(50,60)
\put(0,45){$\tableauL{ \ & \ & \ \\ & \  & \  \\ &  &  \ }$}
\put(5,43){$1$}
\put(22,22){$2 4$}
\put(39,3){$3 5 6$}
\end{picture}$};
\end{tikzpicture}};
\node at (3,-0.75){\rotatebox{340}{$\xrightarrow{\hspace*{0.9cm}\mathcal{I}(T')_1=\{1\}\hspace*{0.9cm}}$}};
\node at (3,-1.75){\tiny$\ytableausetup
{boxsize=1.1em}\begin{ytableau}
*(ltrBlue) 1 &  2  & 3  \\
\none & 4 & 5 \\
 \none & \none & 6
\end{ytableau}$};
\node  at (6,-2) {\begin{tikzpicture}
\node at (0,0) {$\begin{picture}(50,60)
\put(0,45){$\tableauL{ \ & \ & \ \\ & \  & \  \\ &  &  \ }$}
\put(20,22){$1 2 4$}
\put(39,3){$3 5 6$}
\end{picture}$};
\end{tikzpicture}};

\node at (9,2.5){$\xrightarrow{\hspace*{0.7cm}\mathcal{I}(T)_2=\{2\}\hspace*{0.7cm}}$};
\node at (9,1.75){\tiny$\ytableausetup
{boxsize=1.1em}\begin{ytableau}
1 &  *(ltrBlue)2  & 3  \\
\none & *(ltrBlue)4 & 5 \\
 \none & \none & 6
\end{ytableau}$};
\node  at (12,2) {\begin{tikzpicture}
\node at (0,0) {$\begin{picture}(50,60)
\put(0,45){$\tableauL{ \ & \ & \ \\ & \  & \  \\ &  &  \ }$}
\put(5,43){$1$}
\put(24,22){$4$}
\put(34,3){$2 3 5 6$}
\end{picture}$};
\end{tikzpicture}};

\node at (9,-1.5){$\xrightarrow{\hspace*{0.7cm}\mathcal{I}(T')_2=\{2\}\hspace*{0.7cm}}$};
\node at (9,-2.25){\tiny$\ytableausetup
{boxsize=1.1em}\begin{ytableau}
*(ltrBlue)1  &  *(ltBlue)2  & 3  \\
\none & *(ltrBlue)4 & 5 \\
 \none & \none & 6
\end{ytableau}$};
\node  at (12,-2) {\begin{tikzpicture}
\node at (0,0) {$\begin{picture}(50,60)
\put(0,45){$\tableauL{ \ & \ & \ \\ & \  & \  \\ &  &  \ }$}
\put(22,22){$1 4$}
\put(34,3){$2 3 5 6$}
\end{picture}$};
\end{tikzpicture}};
\end{tikzpicture}
\end{center}
Thus $\mathcal{I}(T)_k$, or $\mathcal{I}(T')_k$, is slidable if and only if all entries of $\mathcal{I}(T)_k$, or $\mathcal{I}(T')_k$, are shaded lightly. Here
$\mathcal{I}(T)_1, \mathcal{I}(T')_1$, and $\mathcal{I}(T)_2$ are all slidable, but $\mathcal{I}(T')_2$ is not. Therefore by Proposition \ref{prop:countCompat}, ${\rect}(T)=S_{\rho_{3,3}}$, but ${\rect}(T')\neq S_{\rho_{3,3}}$.
\end{example}

\begin{theorem}\label{thm:Local2n2}
 $d_{\rho_n,\rho_{n,m}}^{\rho_{n}}=2^{\binom{n}{2}-\binom{n-m}{2}}$.
\end{theorem}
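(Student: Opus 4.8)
The plan is to feed Proposition \ref{prop:countCompat} into a direct enumeration. Since $\rho_n/\rho_n$ has no boxes, every $T\in\Tab(\rho_n/\rho_n,|\rho_{n,m}|)$ is simply a distribution of the labels $[|\rho_{n,m}|]$ among the $n$ diagonal edges, and by Proposition \ref{prop:countCompat} we have $\rect(T)=S_{\rho_{n,m}}$ if and only if $T$ is \good{} and $\mathcal{I}(T)$ is slidable. Because $\mathcal{I}(T)$ is unique, the assignment $T\mapsto \mathcal{I}(T)$ is a bijection from \good{} tableaux to valid slide decompositions $(I_1,\ldots,I_{n-1})$. Hence $d_{\rho_n,\rho_{n,m}}^{\rho_n}$ is the number of slidable such tuples, which I will evaluate as a product over $k\in[n-1]$ of the number of slidable choices for $I_k$.

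The core of the argument is to show that, for each $k\in[n-1]$ and \emph{every} slidable choice of $I_1,\ldots,I_{k-1}$, the number of slidable $I_k$ equals $2^{\min(k,m)}$, independent of the earlier choices. By the slidability condition \eqref{eqn:n-slidable}, the slidable $I_k$ are exactly the subsets of the candidate set $C_k:=\bigcup_{i=1}^{k}\{\min(E_k(T^{(k)})\cap {\sf row}_i(S_{\rho_{n,m}}))\}$, so there are $2^{|C_k|}$ of them, where $|C_k|$ counts the rows $i\in[k]$ that meet $E_k(T^{(k)})$. The key observation is that among $\mathsf{Sl}_{I_1},\ldots,\mathsf{Sl}_{I_{k-1}}$ only the last slide touches edge $k$, so $E_k(T^{(k)})=E_k(U_{n,m})\cup I_{k-1}\supseteq E_k(U_{n,m})={\sf col}_k(S_{\rho_{n,m}})$. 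Now ${\sf col}_k(S_{\rho_{n,m}})$ contains exactly one entry in each of the rows $1,\ldots,\min(k,m)$ and none below, so those rows are all met; conversely any entry lying in a row $i\le k$ automatically has $i\le m$ and hence $i\le\min(k,m)$, so no further rows of $[k]$ can be met. Therefore $|C_k|=\min(k,m)$ for every admissible history $I_1,\ldots,I_{k-1}$.

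Assembling these factors gives
\[
d_{\rho_n,\rho_{n,m}}^{\rho_n}=\prod_{k=1}^{n-1}2^{\min(k,m)}=2^{\sum_{k=1}^{n-1}\min(k,m)}=2^{\binom{n}{2}-\binom{n-m}{2}},
\]
where the final equality is the elementary identity $\sum_{k=1}^{n-1}\min(k,m)=\binom{n}{2}-\binom{n-m}{2}$ (equivalently, $|\rho_{n,m}|-\ell(\rho_{n,m})$), which I would verify by splitting the sum at $k=m$.

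I expect the main obstacle to be the bookkeeping in the middle paragraph: confirming that edge $k$ is untouched by the slides $\mathsf{Sl}_{I_1},\ldots,\mathsf{Sl}_{I_{k-2}}$ so that $E_k(T^{(k)})\supseteq E_k(U_{n,m})$, and arguing that the incoming labels $I_{k-1}$ cannot enlarge the set of rows of $[k]$ already covered by ${\sf col}_k(S_{\rho_{n,m}})$. Once this independence of $|C_k|$ from the earlier choices is in place, the product form of the count and the closing arithmetic are immediate.
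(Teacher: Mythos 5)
Your proof is correct, but it counts in the transposed direction from the paper. Both arguments start from Proposition \ref{prop:countCompat} and the bijection between \good{} tableaux and slide decompositions, but the paper slices $\mathcal{I}(T)$ by \emph{rows} of $S_{\rho_{n,m}}$: it sets $J^k_i=\mathcal{I}(T)_i\cap{\sf row}_k(S_{\rho_{n,m}})$ and argues, by reusing the pigeonhole argument from Proposition \ref{prop:countCompat} together with the Pieri case $d_{\rho_{n-k},(n-k)}^{\rho_{n-k}}=2^{n-k}$ of Theorem \ref{thm:locPieri}, that each row $k\in[m]$ contributes a factor $2^{n-k}$. You instead slice by \emph{edges} (equivalently, columns), showing that for every admissible history the candidate set $C_k$ in Equation (\ref{eqn:n-slidable}) has exactly $\min(k,m)$ elements --- because $E_k(T^{(k)})=E_k(U_{n,m})\cup I_{k-1}$, the set ${\sf col}_k(S_{\rho_{n,m}})$ already meets rows $1,\dots,\min(k,m)$, and the imported labels $I_{k-1}$ live in columns $\le k-1$ and hence in rows $\le\min(k,m)$ --- so edge $k$ contributes $2^{\min(k,m)}$ independently of earlier choices. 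The two factorizations agree since $\sum_{k=1}^{n-1}\min(k,m)=\sum_{k=1}^{m}(n-k)=\binom{n}{2}-\binom{n-m}{2}$. Your route is the more self-contained of the two: it avoids invoking Theorem \ref{thm:locPieri} and the indirect pigeonhole step, at the cost of the bookkeeping showing $|C_k|$ is history-independent, which you carry out correctly. One point worth making explicit in either version: the count of slidable tuples equals the count of tableaux because every tuple $(I_1,\dots,I_{n-1})$ with $I_\ell\subseteq E_\ell(T^{(\ell)})$ does produce a \good{} tableau (slides only move labels to higher-indexed edges) and distinct tuples produce distinct tableaux; this is implicit in the paper as well.
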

\begin{proof}
By Proposition \ref{prop:countCompat} $d_{\rho_n,\rho_{n,m}}^{\rho_n}$ counts the slidable 
$\mathcal{I}(T)$ over $T\in \Tab(\rho_n / \rho_n,|\rho_{n,m}|)$.
 Let
 $J^k=(J^k_1,J^k_2,\ldots,J^k_{n-1})$ be such that $J^k_i=\mathcal{I}(T)_i\cap [{\sf row}_k(S_{\rho_{n,m}})]$.
 Using the same argument presented in the proof of Proposition \ref{prop:countCompat}, there are $d_{\rho_{n-k},(n-k)}^{\rho_{n-k}}=2^{n-k}$ choices for slidable $J^k$.
Summing over $k\in[m]$, this gives $2^{\binom{n}{2}-\binom{n-m}{2}}$ ways to choose slidable $\{J^k\}_{k\in[m]}$ and thus $2^{\binom{n}{2}-\binom{n-m}{2}}$ ways to choose slidable $\mathcal{I}(T)$.
\end{proof}

\begin{proof}[Proof of Theorem \ref{thm:locCoeffSkewSC}]
Suppose $\ell(\mu)=m\leq\ell(\lambda)=n$ such that for $\rho_{n,m}\subseteq \mu$. 
Observe that $|\mathcal{E}_{\rho_n}(\mu)| = 1$ if $\mu\subseteq\rho_n$ and $0$ otherwise.
Thus by Lemma \ref{lem:locAF}, when $\mu\subseteq\rho_n$, \emph{i.e.} when $\mu=\rho_{n,m}$,
\begin{equation*}
{\mathfrak d}_{\lambda,\rho_{n,m}}^{\lambda}= 2^{|\rho_{n,m}|-\ell(\rho_{n,m})}=
2^{|\rho_{n,m}|-m}=
2^{\binom{n}{2}-\binom{n-m}{2}}
\end{equation*}
 and otherwise ${\mathfrak d}_{\lambda,\mu}^{\lambda}=0$. 
This proves the rightmost equality.

By the definition of $d_{\lambda,\mu}^{\lambda}$, if $\mu\not\subseteq\rho_n$, $d_{\lambda,\mu}^{\lambda}=0$.
Similarly, it is straightforward to see that $d_{\lambda,\rho_{n,m}}^{\lambda}=d_{\rho_n,\rho_{n,m}}^{\rho_n}$ since $m\leq \ell(\lambda)$.
Thus the result follows by Theorem \ref{thm:Local2n2}.
\end{proof}

\begin{example}
Below we illustrate the bijection between the shading of $\lambda$ and the tableau $T$ from Example \ref{ex:shadeBij}. The shaded subsets of the first $n-1$ columns of $\rho_{n,m}$ define a choice of slidable $\mathcal{I}(T)$, where box $(i,r)$ is shaded if and only if $\min E_i(T^{(i)})\cap [{\sf row}_r(S_{\rho_{n,m}})]\in \mathcal{I}(T)_i$. 
Below each arrow, we illustrate how the choice of $\mathcal{I}(T)_i$ affects the shading.
	\[\begin{picture}(450,80)
\put(0,55){$\ytableausetup
{boxsize=1.3em}\begin{ytableau}
 \ & \  & \  & \ & \  \\
\none  & \ & \ & \ & \ \\
\none  & \none & \ & \ & \ \\
 \none & \none & \none & \ 
\end{ytableau}$}
\put(5,52){$1$}
\put(19,33){$2 5$}
\put(35,18){$3 6 $}
\put(50,1){$4 7 $}
\put(85,35){$\xrightarrow{\mathcal{I}(T)_1=\{1\}}$}
\put(90,20){$\ytableausetup
{boxsize=0.6em}\begin{ytableau}
 *(ltrBlue)\ & \  & \  & \ & \  \\
\none  & \ & \ & \ & \ \\
\none  & \none & \ & \ & \ \\
 \none & \none & \none & \ 
\end{ytableau}$}
\put(125,55){$\ytableausetup
{boxsize=1.3em}\begin{ytableau}
 \ & \  & \  & \ & \  \\
\none  & \ & \ & \ & \ \\
\none  & \none & \ & \ & \ \\
 \none & \none & \none & \ 
\end{ytableau}$}
\put(140,33){$1 2 5$}
\put(160,18){$3 6 $}
\put(176,1){$4 7 $}
\put(210,35){$\xrightarrow{\mathcal{I}(T)_2=\{1\}}$}
\put(215,20){$\ytableausetup
{boxsize=0.6em}\begin{ytableau}
 *(ltrBlue)\ & *(ltrBlue)\  & \  & \ & \  \\
\none  & \ & \ & \ & \ \\
\none  & \none & \ & \ & \ \\
 \none & \none & \none & \ 
\end{ytableau}$}
\put(250,55){$\ytableausetup
{boxsize=1.3em}\begin{ytableau}
 \ & \  & \  & \ & \  \\
\none  & \ & \ & \ & \ \\
\none  & \none & \ & \ & \ \\
 \none & \none & \none & \ 
\end{ytableau}$}
\put(268,33){$2 5$}
\put(281,18){$1 3 6 $}
\put(301,1){$4 7 $}
\put(335,35){$\xrightarrow{\mathcal{I}(T)_3=\{6\}}$}
\put(340,20){$\ytableausetup
{boxsize=0.6em}\begin{ytableau}
 *(ltrBlue)\ & *(ltrBlue)\  & \  & \ & \  \\
\none  & \ & *(ltrBlue)\ & \ & \ \\
\none  & \none & \ & \ & \ \\
 \none & \none & \none & \ 
\end{ytableau}$}
\put(375,55){$\ytableausetup
{boxsize=1.3em}\begin{ytableau}
 \ & \  & \  & \ & \  \\
\none  & \ & \ & \ & \ \\
\none  & \none & \ & \ & \ \\
 \none & \none & \none & \ 
\end{ytableau}$}
\put(393,33){$2 5$}
\put(410,18){$1 3 $}
\put(422,1){$4 6 7 $}
\end{picture}\] 
\end{example}

\section*{Acknowledgements}
We thank Harshit Yadav and Alexander Yong for helpful  conversations. This material is based upon work of the author supported by the National Science Foundation Graduate Research Fellowship Program under Grant No. DGE -- 1746047 and the National Science Foundation Research Training Grant No. DMS 1937241.

\end{document}